\documentclass[12pt]{article}

\usepackage[a4paper,scale=0.75]{geometry}

\usepackage{amsmath}
\usepackage{amssymb}
\usepackage{amsfonts}
\usepackage{color}
\usepackage{biblatex}
\usepackage{amsthm}

\newcommand{\nc}{\newcommand}
\nc{\thref}[1]{Theorem~\ref{theo:#1}}
\nc{\selabel}[1]{\label{sect:#1}}
\nc{\seref}[1]{Section~\ref{sect:#1}}
\nc{\lelabel}[1]{\label{lemm:#1}}
\nc{\leref}[1]{Lemma~\ref{lemm:#1}}
\nc{\prlabel}[1]{\label{prop:#1}}
\nc{\prref}[1]{Proposition~\ref{prop:#1}}
\nc{\colabel}[1]{\label{coro:#1}}
\nc{\coref}[1]{Corollary~\ref{coro:#1}}
\nc{\exlabel}[1]{\label{exam:#1}}
\nc{\exref}[1]{Example~\ref{exam:#1}}
\nc{\delabel}[1]{\label{defi:#1}}
\nc{\deref}[1]{Definition~\ref{defi:#1}}
\nc{\eqlabel}[1]{\label{equa:#1}}
\nc{\relabel}[1]{\label{rema:#1}}
\nc{\reref}[1]{Lemma~\ref{rema:#1}}
\providecommand{\operatorname}[1]{\mathrm{#1}\,}
\nc{\Hom}{\operatorname{Hom}} \nc{\Mor}{\operatorname{Mor}}
\nc{\Aut}{\operatorname{Aut}} \nc{\Ann}{\operatorname{Ann}}
\nc{\Ker}{\operatorname{Ker}} \nc{\Trace}{\operatorname{Trace}}
\nc{\Char}{\operatorname{Char}} \nc{\Mod}{\operatorname{Mod}}
\nc{\End}{\operatorname{End}} \nc{\Spec}{\operatorname{Spec}}
\nc{\Span}{\operatorname{Span}} \nc{\sgn}{\operatorname{sgn}}
\nc{\Id}{\operatorname{Id}} \nc{\Com}{\operatorname{Com}}
\nc{\rank}{\operatorname{rank}} \nc{\Li}{\operatorname{Li}}
\newenvironment{nonitalicproof}{\par\noindent{\bfseries}}{\par}

\let\:=\colon

\newtheorem{de}{Definition}[section]
\newtheorem{pf}{Proof}
\newtheorem{lm}[de]{Lemma}
\newtheorem{cm}{Claim}
\newtheorem{pr}[de]{Proposition}
\newtheorem{co}[de]{Corollary}
\newtheorem{re}[de]{Remark}
\newtheorem{res}[de]{Remarks}
\newtheorem{te}[de]{Theorem}
\newtheorem{ex}[de]{Example}
\newtheorem{exs}[de]{Examples}

\let\:=\colon
\usepackage{graphicx} 

\title{Brennan Conjecture for Basin of Attraction at Infinity}
\author{Yigang Zheng }
\date{}

\begin{document}

\maketitle

\begin{abstract}
    This paper investigates the Brennan Conjecture for  domains $\Omega$ that arise as basins of attraction of a polynomial. We extend the result of Baranski, Volberg, and Zdunik to a broader class of polynomials. We prove that for any monic polynomial of degree $m$ with  sufficiently small 
    non-leading coefficients, Brennan Conjecture holds for its basin of attraction.
\end{abstract}

\section{Introduction}
In \cite{1978The}, Brennan studied the exponent $p\in \mathbb{R}$ for which\\ 
\begin{align}\label{Breenan}
    \int_{\Omega}\left|F'\right|^{p}<+\infty
\end{align}
where $F:\Omega\rightarrow \mathbb{D}$ is a biholomorphic map.
He conjectured that the integral in (\ref{Breenan}) is convergent for all 
$p\in (\frac{4}{3},4)$. For $p=2$ this integral represents the area of the unit disk and is therefore finite. If $\Omega$ is the complement of the slit in the complex plane, then the integral evidently diverges for $p=\frac{4}{3}$ and $p=4$.
It is known that this integral converges when $\frac{4}{3}<p<3.75$ by the work of Hedenmalm and Shimorin in \cite{MR2130416}, improving on earlier result of Bertilsson in \cite{1999On}. There are many other related papers, such as  \cite{Thomas1973On}.

Let $f(z)=\sum_{k=0}^{m}a_{k}z^{k}$ be a polynomial of degree $m\ge 2$ ($a_{m}\neq 0$). We define the basin of attraction of $f$ at infinity to be 
$$\Omega(f)=\{z\in \hat{\mathbb{C}}\mid f^{n}(z)\rightarrow \infty~as~n\rightarrow +\infty\}.$$
If $\Omega=\Omega(f)$ is simply-connected, it is natural to ask whether the Brennan conjecture holds for this domain.

In \cite{1} Baranski, Volberg, and Zdunik proved this to be the case for $m=2$.
\begin{te}\label{BVz}
    Let $f(z)=z^{2}+a_{0}$ be a polynomial of degree $m=2$ such that $\Omega(f)$ is simply-connected, then Brennan conjecture holds for the domain $\Omega(f)$.
\end{te}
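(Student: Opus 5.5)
The plan is to recast Brennan's integral in terms of the integral means spectrum of the Riemann map and then control that spectrum by the thermodynamic formalism of $f$ on its Julia set $J=\partial\Omega(f)$. Let $\varphi:\mathbb{D}^{*}\to\Omega(f)$ be the exterior Riemann map with $\varphi(\infty)=\infty$, i.e.\ the inverse B\"ottcher coordinate. It conjugates $z\mapsto z^{2}$ to $f$: $f(\varphi(z))=\varphi(z^{2})$. Changing variables $w=\varphi(z)$, finiteness of $\int_{\Omega}|F'|^{p}$ near $\partial\Omega$ becomes finiteness of $\int_{1<|z|<2}|\varphi'(z)|^{2-p}\,dA(z)$. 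The region near $\infty$ is harmless after composing with an inversion, since $\varphi(z)\sim cz$. So it suffices to show that the integral means spectrum
\[
\beta(t)=\limsup_{r\to 1^{+}}\frac{\log\int_{0}^{2\pi}|\varphi'(re^{i\theta})|^{t}\,d\theta}{\log\frac{1}{r-1}}
\]
satisfies $\beta(t)<t-1$ for $t=2-p\in(-2,2/3)$. Only $t\in(-2,0)$ is genuinely delicate; the range $t\in[0,2/3)$ follows from known general bounds such as $\beta(t)\le 3t^{2}$ for small $t$.

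The key step expresses $\beta$ through pressure. Let $\psi=-\log|f'|$ on $J$, and let $P(s)$ be the topological pressure of $s\psi$; note $J$ contains no critical point when $\Omega$ is simply connected and not degenerate. The functional equation allows the circle $|z|=r$ with $r=2^{1/2^{n}}$ to be pulled back by $n$ iterates. Koebe distortion on the dyadic Whitney boxes then gives, at scale $2^{-n}$,
\[
\int|\varphi'|^{t}\,d\theta\asymp\sum_{\text{boxes}}|(f^{n})'|^{-t}2^{-n(1-t)},
\]
and this identifies $\beta(t)=t-1+P(t)/\log 2$. The Brennan condition $\beta(t)<t-1$ is then equivalent to $P(t)<0$ for $t\in(-2,0)$. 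Equivalently, one can state this as the strict inequality $\beta(-2)<1$ at the endpoint (i.e.\ $P(-2)<2\log2$ in the normalization where $P(0)=\log 2$), combined with convexity of $\beta$.

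The main obstacle is proving the strict pressure inequality. Here I would use the variational principle: $P(t)=\sup_{\mu}\bigl(h_{\mu}-t\int\log|f'|\,d\mu\bigr)$ with $h_{\mu}\le\log 2$, so for negative $t$ one needs an upper bound on Lyapunov exponents $\chi_{\mu}=\int\log|f'|\,d\mu$ of $f$-invariant measures. Since $|f'(z)|=2|z|$ on $J$ and $J\subset\{|z|\le R_{c}\}$ with $R_{c}$ explicit (e.g.\ $|z|\le\tfrac12+\sqrt{\tfrac14+|c|}$), the bound $\chi_{\mu}\le\log(2R_{c})$ handles small $|c|$ directly. For general $c$ in the Mandelbrot set with $\Omega$ simply connected, I expect one needs a sharper argument: the maximal Lyapunov exponent is attained on periodic orbits, and one compares with harmonic measure exponent $\chi_{\omega}=\log 2$ (Manning/Przytycki) together with Makarov's dimension result to get strictness. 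Where that fails, I would use the Baranski--Volberg--Zdunik approach of bounding $\sup\chi_{\mu}$ via a Carleson-type estimate for boundary distortion.
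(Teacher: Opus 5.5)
You have a genuine gap: the strict inequality that carries the whole theorem is never proved, and the inequality you set out to prove is itself misstated.

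\textbf{The target.} Your Koebe/Whitney-box step is the same as the paper's reduction in Section~2. There it gives $\int_\Omega|F'|^4\lesssim\sum_n 2^{-4n}S_n$ with $S_n=\sum_k|(f^n)'(w_{n,k})|^2$. With your own formula $\beta(t)=t-1+P(t)/\log 2$, finiteness of $\int_{1<|z|<2}|\varphi'|^{t}\,dA$ requires $\beta(t)<1$, i.e.\ $P(t)<(2-t)\log 2$. At $t=-2$ this reads $P(-2)<4\log 2$, which is the counterpart of the paper's requirement that $S_n$ grow at a rate below $m^4=16$. The conditions you actually write ($\beta(t)<t-1$, $P(t)<0$ on $(-2,0)$, and $P(-2)<2\log 2$) can never hold. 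First, $\beta\ge 0$ by Jensen's inequality, since $\log|\varphi'|$ is harmonic on $\{|z|>1\}\cup\{\infty\}$ and so has constant circle means. Second, harmonic measure has entropy and Lyapunov exponent both equal to $\log 2$, so $P(t)\ge(1-t)\log 2$, and in particular $P(-2)\ge 3\log 2$.

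\textbf{The missing step.} The strict inequality $P(-2)<4\log 2$ is the entire content of the theorem, and you never establish it. Your bound $\chi_\mu\le\log(2R_c)$ gives $P(-2)\le 3\log 2+2\log R_c$. This is below $4\log 2$ only when $R_c<\sqrt2$, i.e.\ $|c|<2-\sqrt2$, a small part of the Mandelbrot set.

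The paper hits essentially the same wall. Its one-step identity $\sum_{f(w)=v}|f'(w)|^2=8|v-c|$ yields only $\limsup_n S_{n+1}/S_n\le 16\sqrt2>16$ for $m=2$. That is why the paper does not prove this statement at all: it quotes it from Baranski--Volberg--Zdunik and proves its own version only for $m\ge 3$.

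No bound uniform in $c$ can succeed. For real $c\downarrow-2$ the fixed point $\frac12(1+\sqrt{1-4c})$ has multiplier $1+\sqrt{1-4c}\to 4$, so $P(-2)\ge 2\log(1+\sqrt{1-4c})\to 4\log 2$. At $c=-2$ itself ($\Omega=\hat{\mathbb{C}}\setminus[-2,2]$, $\varphi(z)=z+1/z$) one has $\beta(-2)=1$ exactly. So your endpoint reduction fails there, and the open range $t\in(-2,0)$ must be handled separately.

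The rest of your plan does not address this. The exponent $\chi_\omega=\log 2$ of harmonic measure and Makarov's theorem concern a single measure and only bound the pressure from below. You need an upper bound on $\sup_\mu(h_\mu+2\chi_\mu)$, which is governed by measures of large exponent. Falling back on ``the Baranski--Volberg--Zdunik approach'' where your argument fails is circular.

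Your claim that $J$ contains no critical point is also false for many $c$ with $\Omega$ simply connected (e.g.\ $c=-2$, $c=i$). So the H\"older thermodynamic formalism and the identification of $\beta$ with a variational pressure are not available as you state them. The paper sidesteps this by working only with the sums $S_n$.

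A minor point: $\beta(t)\le 3t^2$ does not reach $t$ near $2/3$. The classical universal range $4/3<p<3$ covers $t\in[0,2/3)$ instead.
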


\begin{re}
    Every polynomial of degree $m=2$ is conjugate to one of the form $z^{2}+a_{0}$, so Theorem \ref{BVz} covers all degree-2 polynomials. But for $m>2$ this is not the case.                         
\end{re}

In this paper, we first extend Theorem \ref{BVz} to polynomials of the form $f(z)=z^{m}+a_{0}$ for general $m$.

\begin{te}\label{pure generalization of BVZ}
    When $m\ge 3$, for $f(z)=z^{m}+a_{0}$ such that $\Omega(f)$ is simply-connected, Brennan conjecture holds for $\Omega(f)$.
\end{te}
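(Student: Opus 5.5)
We follow the strategy of Baranski, Volberg and Zdunik for $m=2$, transplanted to $f(z)=z^{m}+a_{0}$. Let $J=\partial\Omega(f)$ be the Julia set, which is connected because $\Omega(f)$ is simply connected. The first step is to rewrite the Brennan integral in thermodynamic language. Let $\Phi\colon \hat{\mathbb{C}}\setminus\overline{\mathbb{D}}\to\Omega(f)$ be the Böttcher coordinate, normalized so that $\Phi(z^{m})=f(\Phi(z))$ and $\Phi(z)\sim z$ at $\infty$. A conformal map $F$ of $\Omega$ onto $\mathbb{D}$ is then $F(w)=1/\Phi^{-1}(w)$ (infinity is sent to $0$). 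Changing variables and looking near the boundary, the finiteness of $\int_{\Omega}|F'|^{p}$ is equivalent to
\[
\int_{1<|z|<2}|\Phi'(z)|^{2-p}\,dA(z)<\infty .
\]
We cut the annulus $1<|z|<R^{1/m^{n}}$ dyadically using the preimages of a fundamental annulus under $z\mapsto z^{m}$. The functional equation gives $(\Phi^{-1})$-derivatives along these pieces as products $\prod_{j}|f'(f^{j}w)|$, with bounded distortion on Whitney pieces. This reduces convergence for a given $t=p-2$ to the condition that the pressure of $-t\log|f'|$ is below the exponential growth $\log m^{2}$ of the number of pieces, weighted by area. Concretely, we will show that the integral converges if and only if
\[
P(t):=P\bigl(-t\log|f'|\ \text{w.r.t. harmonic measure coding}\bigr)<\ (\text{explicit linear threshold in } t),
\]
where $P$ is the pressure of the expanding (or at least boundary-coded) map $z\mapsto z^{m}$ on the circle with potential $-t\log|\Phi'|$, i.e.\ the integral spectrum $\beta_{\Omega}(t)$. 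Brennan's conjecture becomes $\beta_{\Omega}(-2)<1$ (equivalently $\beta(t)<|t|-1$ at $t=-2$), and the range $(4/3,3)$ is already classical. So only $p$ near $4$, i.e.\ $t=-2$, needs attention.

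The second step is to estimate $\beta(-2)$. We write it through the pressure
\[
\beta(-2)=\frac{P\bigl(2\log|\Phi'|\bigr)}{\log m}-1
\]
on the circle dynamics. We then use the invariance under the rotation $z\mapsto e^{2\pi i/m}z$, which holds because $f(\zeta z)=f(z)$ with $\zeta^{m}=1$, so $J$ has $m$-fold symmetry. This symmetry is exactly what replaces the $z^{2}+c$ symmetry used by Baranski, Volberg and Zdunik. As in their proof, we split into two cases:

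\emph{Hyperbolic or subhyperbolic case.} Here $a_{0}$ lies in the interior of the connectedness locus or outside it. Then $\log|\Phi'|$ on the boundary is controlled by $\log|f'|$ on $J$ via a Gibbs measure, and we show $P(2\log|\Phi'|)<2\log m$. This would follow from a strict inequality between the harmonic measure and the measure of maximal entropy. Zdunik's theorem supplies this strict inequality unless $J$ is a circle or an interval, and for $z^{m}+a_{0}$ with $m\ge 3$ the only exceptional case is $a_{0}=0$, where $J$ is the unit circle and the claim is trivial.

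\emph{Parabolic and Misiurewicz/boundary case.} Here we need uniform control of the singularities of $\Phi'$ near the finitely many critical orbits. The only critical point is $0$, with multiplicity $m-1$, so near preimages of $0$ we get $|\Phi'|\asymp|z-z_{0}|^{-(m-1)/m}$-type behaviour. We will verify that such singularities contribute an integrable term when $p<4$.

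The main obstacle is this non-hyperbolic case: bounding the pressure at $t=-2$ when the critical orbit lies in $J$, so that harmonic measure near critical preimages is highly non-uniform. We would first try to import the BVZ argument: induce on a neighbourhood avoiding the critical orbit, and show that the inducing tail contributes a series $\sum_{n}m^{-n}\cdot(\text{local factor})^{n}$. Finiteness of this series requires the local exponent to satisfy $2(m-1)/m<2$, which always holds. Degree $m$ then enters only through the exponent $(m-1)/m$ and the count $m^{n}$ of preimages, and we expect these to cancel exactly as they do for $m=2$.
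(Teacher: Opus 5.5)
There is a genuine gap: the one quantitative inequality the theorem amounts to is never proved.

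\textbf{What has to be shown.} After the reduction, the task is $P(2\log|f'|)<4\log m$, that is, $\limsup_n S_n^{1/n}<m^4$ with $S_n=\sum_{f^n(w)=w_0}|(f^n)'(w)|^2$. Your conventions are off here: with $\int_{\mathbb{D}}|\phi'|^{2-p}$ the endpoint $p=4$ is $t=-2$, not $t=p-2$. In the hyperbolic case the correct relation is $\beta(t)=t-1+P(-t\log|f'|)/\log m$, not your formula for $\beta(-2)$.

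\textbf{The hyperbolic case.} You try to get this inequality from Zdunik's theorem, which cannot work. For a polynomial, harmonic measure from $\infty$ \emph{is} the measure of maximal entropy, so there is no strict inequality between them. What Zdunik proves is $\dim\omega<\dim J$ unless $J$ is a circle or an interval. That is purely qualitative: it says $\beta\not\equiv 0$, but gives no numerical upper bound on $\beta(-2)$. Your argument also uses nothing about $z^m+a_0$ beyond excluding the exceptional case, so it would prove Brennan's conjecture for every hyperbolic polynomial basin. Since polynomial basins already realize the universal integral means spectrum (Makarov's fractal approximation), that would essentially be the full conjecture.

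\textbf{The non-hyperbolic case.} This part is only a plan (``we would first try'', ``we expect these to cancel''). The hyperbolic versus parabolic/Misiurewicz split also omits Siegel, Cremer and infinitely renormalizable parameters, where $J$ need not be locally connected, so a potential `on the circle' is not even defined. And the concern is misplaced anyway: at $p=4$ the weights are $|(f^n)'|^{2}$, which critical points make smaller, not larger.

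\textbf{The missing ingredient.} What you need is a direct, elementary estimate that uses the specific form of $f$.
\begin{itemize}
\item Grouping by $v=f(u)$ gives $S_{n+1}=\sum_{f^n(v)=w_0}|(f^n)'(v)|^2\sum_{f(u)=v}|f'(u)|^2$. Since $|f'(u)|=m|v-a_0|^{(m-1)/m}$ whenever $f(u)=v$, the inner sum is $m^3|v-a_0|^{2(m-1)/m}$.
\item Simple connectivity forces the unique critical point $0$ not to escape, so $a_0=f(0)\notin\Omega(f)$. Because $\Omega(f)\supseteq\{|z|>\alpha(|a_0|)\}$, where $\alpha(a)$ is the root of $t^m-t=a$ in $[1,\infty)$, this gives $|a_0|\le 2^{1/(m-1)}$ and $\mathbb{C}\setminus\Omega(f)\subseteq\overline{\mathrm{B}}_{2^{1/(m-1)}}(0)$.
\item The preimages of $w_0$ accumulate only on $\partial\Omega(f)$, so eventually $|v|\le 2^{1/(m-1)}+\epsilon$.
\item The rotational symmetry must be used quantitatively, not just invoked. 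For $\zeta^m=1$ we have $|(f^n)'(\zeta v)|=|(f^n)'(v)|$, and the preimage set is $\zeta$-invariant. Average over each orbit $\{\zeta^l v\}$, use concavity of $x\mapsto x^{(m-1)/m}$, and use $\frac{1}{m}\sum_l|\zeta^l v-a_0|^2=|v|^2+|a_0|^2$.
\end{itemize}
This yields
\[
\limsup_{n\to\infty}\frac{S_{n+1}}{S_n}\le m^3\bigl(2\cdot 2^{2/(m-1)}\bigr)^{(m-1)/m}=2^{(m+1)/m}m^3<m^4,
\]
since $2^{(m+1)/m}\le 2^{4/3}<3\le m$. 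Combined with the Koebe-distortion reduction $\int_\Omega|F'|^4\lesssim\sum_n m^{-4n}S_n+C$, this gives convergence at $p=4$, hence on $[2,4]$ via $u^p\le u^2+u^4$. It holds uniformly in $a_0$, with no hyperbolic/non-hyperbolic case split, and the lower range is classical.
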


Guided by perturbation ideas, we relax the condition a little bit on the coefficients and obtain a more general result.

\begin{te}\label{more freedom on coefficients}
    For every $m\ge 3$, there exists $\delta=\delta(m)>0$ with the following property:\\
    Let $f(z)=z^{m}+\sum_{k=0}^{m-1}a_{k}z^{k}$ such that $\Omega(f)$ is simply-connected. If $\left|a_{k}\right|<\delta$ for $1\le k\le m-1$, then the Brennan conjecture holds for $\Omega(f)$.
\end{te}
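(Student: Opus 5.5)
The approach follows the strategy of Baranski, Volberg and Zdunik for Theorem \ref{BVz}. We will show that it goes through uniformly over the family of polynomials in Theorem \ref{more freedom on coefficients}, with Theorem \ref{pure generalization of BVZ} as the unperturbed case $a_1=\dots=a_{m-1}=0$.

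\textbf{Reduction to a pressure estimate.} Since $\Omega=\Omega(f)$ is simply connected, let $\Phi:\mathbb{D}\to\Omega$ be the Riemann map with $\Phi(0)=\infty$, and take $F=\Phi^{-1}$. Near $\partial\mathbb{D}$, $\Phi$ conjugates $z\mapsto z^m$ to $f$ (Böttcher coordinate). The integral $\int_\Omega |F'|^p$ is then controlled by sums over dyadic-type pieces of the boundary. Concretely, integrability for $p\in(\frac43,4)$ reduces to the integral means spectrum satisfying $\beta_\Omega(t)<t-1$ for $t=2-p\in(-2,\frac23)$. Since $\Omega$ is a basin of attraction, $\beta(t)$ equals $t-1+P(t)/\log m$ up to normalization, where $P(t)$ is the topological pressure of the potential $-t\log|f'|$ on the Julia set $J(f)=\partial\Omega$ with respect to harmonic measure. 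Equivalently, $\beta(t)=P(t)/\log m - t+1$ with the Makarov--Binder formalism. So the goal becomes showing $\beta(t)<t-1$ on the relevant range, i.e. $\beta(-2)<3$ and $\beta(2/3)<-1/3$.

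\textbf{Structure of the Julia set.} For $f(z)=z^m+a_0$ with $\Omega$ simply connected, $J$ is connected and $f$ has a single finite critical point $0$ of multiplicity $m-1$. The BVZ dichotomy then applies. If $J$ is not a $k$-quasicircle with small $k$, harmonic measure is singular with respect to the measure of maximal entropy of the $t$-conformal measure. In that case the dimension gap $\dim \omega<1$ (Zdunik) together with convexity and analyticity of $\beta$ gives strict inequality at the endpoints. The exceptional cases, where $J$ is analytic, occur only for $a_0=0$ and are checked by hand, since then $\Omega$ is a disk.

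\textbf{Handling the negative endpoint $t=-2$.} Here $\beta(t)$ is governed by points where $|\Phi'|$ is small, i.e. near the critical orbit. The key bound is
$$|(f^n)'(z)|\gtrsim m^{n}\, |\text{distance to critical values}|^{(m-1)/m},$$
which yields that the worst possible behaviour is like the polynomial $z^m$-type cusp of angle $2\pi/m$. This gives $\beta(-2)\le 3-c_m$ for some $c_m>0$ that is explicit for pure powers.

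\textbf{Passing to the perturbed family.} For Theorem \ref{more freedom on coefficients}, $f$ is a small perturbation of $z^m+a_0$. For $|a_k|<\delta$, the $m-1$ critical points lie in a disk of radius $O(\delta^{1/(m-1)})$ around $0$. The estimates above depend only on uniform distortion bounds for inverse branches away from this cluster, and on the local degree near it. I would group the critical points into one cluster and use Koebe distortion on annuli around it to recover the same inequality with constant $c_m/2$, provided $\delta$ is small. The parameter $a_0$ ranges over a compact set, since connectivity of $J$ forces $|a_0|$ to be bounded. The main obstacle is uniformity in $a_0$ near parameters where a critical point is nearly parabolic or nearly escapes. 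I would try to argue that both endpoint inequalities are open conditions in the coefficients and invoke compactness of the connectedness locus slice. The difficulty is that $\beta$ need not be continuous there, so I would fall back on the universal bound $\beta(t)\le$ Hedenmalm--Shimorin-type estimates. That bound covers $p<3.75$, leaving only $p\in[3.75,4)$, i.e. $t$ near $-2$, to be handled by the cluster argument.
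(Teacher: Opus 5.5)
There is a genuine gap: the one estimate that carries the theorem is never produced, and the framework around it is misstated. On the whole range $t\in(-2,\tfrac23)$ you have $t-1<0\le\beta(t)$, since $\beta\ge 0$ always (apply Jensen's inequality to the harmonic function $\log|\Phi'|$). So the targets $\beta(t)<t-1$ and $\beta(2/3)<-1/3$ are impossible. Your two formulas for $\beta$ in terms of $P$ also disagree; test them on $f(z)=z^m$, where $\beta\equiv 0$. The correct target is $\beta(t)<1$, i.e.\ $P(t)<(2-t)\log m$ with $P(t)=\limsup_n\frac1n\log\sum_{f^n(w)=w_0}|(f^n)'(w)|^{-t}$. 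At $t=-2$ this is exactly the paper's reduction (Theorem~\ref{reduction}): $S_n=\sum_k|(f^n)'(w_{n,k})|^2$ must grow slower than $m^{4n}$.

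Your ``BVZ dichotomy'' paragraph rests on false statements. The harmonic measure of $\Omega(f)$ at $\infty$ \emph{is} the measure of maximal entropy (Brolin, Lyubich), so it cannot be singular to it. Also $\dim\omega=1$ for every simply connected domain (Makarov), so there is no gap $\dim\omega<1$. Your ``key bound'' $|(f^n)'|\gtrsim m^n\,\mathrm{dist}^{(m-1)/m}$ is a \emph{lower} bound. At $t=-2$ the integrand $|\Phi'|^{-2}\asymp|(f^n)'|^2m^{-2n}$ needs an \emph{upper} bound on $\sum_k|(f^n)'(w_{n,k})|^2$. The conclusion $\beta(-2)\le 3-c_m$ is asserted, not derived.

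In the perturbative step you concede that $\beta$ may be discontinuous, so openness plus compactness fails. The Hedenmalm--Shimorin fallback only reduces you to $p\in[3.75,4)$, which is the entire content of the statement. That range is left to a ``cluster argument'' that is never formulated.

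The missing idea is an explicit, pointwise bound on the transfer sum. It makes the thermodynamic machinery and the continuity-in-parameter worries unnecessary.

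\begin{itemize}
\item By the chain rule, $S_{n+1}\le S_n\max_k\sum_{f(w)=w_{n,k}}|f'(w)|^2$.
\item Simple connectivity, together with the clustering of critical points near $0$, forces $|a_0|<2^{1/(m-1)}+\epsilon$ and $\Omega(f)\supseteq\{|z|\ge 2^{1/(m-1)}+\epsilon\}$ (Lemmas~\ref{limited effect on the constant term} and~\ref{slight change on the domain}).
\item Lemma~\ref{closetoboundary} then puts the $w_{n,k}$ and their preimages inside that disk for large $n$. There $|f'(w)|\le(2+o(1))m$, so the transfer sum is at most $(4+o(1))m^3<m^4$ when $m\ge 5$.
\item For $m=3,4$ the paper first treats $z^m+a_0$. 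It uses the $m$-fold rotational symmetry ($|g'|$ is constant on $\{\zeta^l v\}$) and concavity of $x\mapsto x^{(m-1)/m}$ to bound the two-step ratio by $m^3(|v|^2+|a_0|^2)^{(m-1)/m}\le m^3 2^{(m+1)/m}<m^4$.
\item It then transfers this to small $a_1,\dots,a_{m-1}$ by continuity of roots on a compact set of pairs $(w,a_0)$, treating $w$ near $a_0$ separately.
\end{itemize}

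This bound is uniform over all $w$ near $\partial\Omega(f)$ and all admissible $a_0$, so nearly parabolic or nearly escaping parameters cause no difficulty. Nothing in your sketch supplies a bound of this kind.
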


The proofs of these theorems make use of the following result, which is shown by  Lennart Carleson and Theodore in \cite{0Complex}.

\begin{te}\label{lct0}
    Let $f$ be a polynomial of degree at least 2, and $C(f)$ be the set of finite critical points. If $C(f)\subseteq\Omega(f)$, then $\Omega(f)$ is not simply-connected.
\end{te}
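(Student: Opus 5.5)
The idea is to construct the Böttcher coordinate of $f$ at infinity and argue by contradiction: if every finite critical point escapes, the Böttcher map can be extended to all of $\Omega(f)$, and this extension forces $\Omega(f)$ to be multiply connected. We may conjugate by an affine map and assume $f$ is monic, $f(z)=z^{m}+\dots$ with $m\ge 2$. Let $G(z)=\lim_{n\to\infty} m^{-n}\log|f^{n}(z)|$ be the Green function of $\Omega(f)$ with pole at infinity. It is harmonic on $\Omega(f)\setminus\{\infty\}$, positive there, and extends continuously by $0$ to the filled Julia set $K(f)=\hat{\mathbb{C}}\setminus\Omega(f)$. It also satisfies the functional equation $G(f(z))=mG(z)$.

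Near $\infty$ there is the Böttcher map $\phi(z)=\lim_{n} (f^{n}(z))^{1/m^{n}}$. It is tangent to the identity, conjugates $f$ to $w\mapsto w^{m}$, and satisfies $\log|\phi|=G$. We extend $\phi$ inward along the level sets $\{G>r\}$ by the standard procedure. Choose the branches through $\phi(z)=\phi(f(z))^{1/m}$ and lower $r$ step by step, in the ranges $r\in(r_0/m^{k+1}, r_0/m^{k}]$. The extension proceeds as long as the region $\{G>r\}$ contains no critical point of $G$. The critical points of $G$ in $\Omega(f)\setminus\{\infty\}$ are exactly the points $z$ with $(f^{n})'(z)=0$ for some $n$, that is, the preimages of the critical points of $f$. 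Hence we set $r^{*}=\max\{G(c): c\in C(f)\}$.

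By hypothesis $C(f)\subseteq\Omega(f)$, so $G(c)>0$ for every critical point $c$ and $r^{*}>0$. The extension therefore gives a conformal isomorphism $\phi:\{G>r^{*}\}\to\{|w|>e^{r^{*}}\}$. Now take a critical point $c$ with $G(c)=r^{*}$. Near $c$ the function $G$ has a saddle, so the level curve $\{G=r^{*}\}$ has at least two arcs crossing at $c$. Consequently the open set $\{G<r^{*}\}$ has at least two components, each bounded by a piece of this level curve. Each component contains points of $K(f)$: by the maximum principle, a bounded component avoiding $K(f)$ would carry the harmonic function $G$ with boundary value $r^{*}$ and interior values below $r^{*}$, which is impossible. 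So $K(f)$ meets at least two disjoint components of $\{G<r^{*}\}$ and is disconnected. A compact set $K\subset\mathbb{C}$ has connected complement that is simply connected in $\hat{\mathbb{C}}$ exactly when $K$ is connected, since a simply connected domain has connected complement. Hence $\Omega(f)$ is not simply connected.

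The step I expect to be the real obstacle is making the topology at the saddle rigorous. One must show that the two local sectors of $\{G<r^{*}\}$ at $c$ lie in genuinely different components, and not in one component that wraps around. I would handle this through the extension: the set $\{G\ge r^{*}\}$ contains the closure of $\phi^{-1}(\{|w|>e^{r^{*}}\})$ and is connected. The level curve $\{G=r^{*}\}$ is the boundary of this unbounded connected domain and is pinched at $c$, where $\phi^{-1}$ identifies at least two points of the circle $|w|=e^{r^{*}}$. A Jordan-curve argument then shows that the curve splits the plane into at least two bounded complementary pieces. Alternatively, Riemann–Hurwitz applied to $f:\Omega(f)\to\Omega(f)$ gives the result quickly. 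This map is a proper map of degree $m$ that is totally ramified at $\infty$, with $m-1$ finite critical points counted with multiplicity inside $\Omega(f)$. If $\Omega(f)$ were simply connected, Riemann–Hurwitz would give $\chi=1$ on both sides, so $1=m\cdot 1-(m-1)-(m-1)=2-m$, contradicting $m\ge 2$. This second argument is the one I would present as the main line, since it avoids the delicate level-set topology entirely.
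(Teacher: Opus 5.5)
Your main line, Riemann--Hurwitz for $f\colon\Omega(f)\to\Omega(f)$, is correct. It is also a genuinely different route from the paper. The paper does not prove this theorem in general: it cites Carleson--Gamelin. Only for the family $z^{m}+a_{0}$ does it give its own argument (Lemma~\ref{origin criteria}). There a path in $\Omega(f)$ from $0$ to a large circle, together with its $m$ rotations and the circle, is used to cut $\hat{\mathbb{C}}\setminus\Omega(f)$ into several pieces, which depends entirely on rotational symmetry.

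To make your argument airtight you only need to state two facts explicitly. First, $\Omega(f)$ is connected and completely invariant, so $f^{-1}(\Omega(f))=\Omega(f)$ and $f|_{\Omega(f)}$ is proper of degree $m$. Second, $\Omega(f)\neq\hat{\mathbb{C}}$ because the finite fixed points lie outside it, so simple connectivity really gives $\chi=1$.

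Your route buys both generality and a stronger conclusion. The identity $1=m-\delta$ forces exactly $m-1$ critical points in $\Omega(f)$, and these are all used up by $\infty$. Hence a \emph{single} finite critical point in $\Omega(f)$ already contradicts simple connectivity. Combined with Theorem~\ref{lct1}, this gives the equivalence between simple connectivity of $\Omega(f)$ and $C(f)\cap\Omega(f)=\emptyset$. Via the Riemann map, this is the same fact as the paper's Lemma~\ref{central}: the conjugate $g(z)=az^{m}$ has no critical point other than $0$. So the paper could have deduced Theorem~\ref{lct0} from its own lemma.

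Your first sketch, via the B\"ottcher coordinate, is the standard textbook route. As you suspected, the passage from the local saddle at $c$ to \emph{$\{G<r^{*}\}$ has at least two components} is not justified as written. If you want to keep it, the cleanest fix is again Riemann--Hurwitz, applied to $f\colon\{G<r^{*}\}\to\{G<mr^{*}\}$. The target is a Jordan domain, and the source contains at most $m-2$ critical points, so a connected source would have $\chi\ge 2$, which is impossible. Since your second argument is complete on its own, you can simply drop the first.
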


A sufficient condition for $\Omega(f)$ to be simply-connected is also given in \cite{0Complex}.

\begin{te}\label{lct1}
    Let $f$ be a polynomial of degree at least 2. If $C(f)\cap\Omega(f)=\emptyset$, then $\Omega(f)$ is simply-connected.
\end{te}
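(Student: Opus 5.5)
The plan is the classical Böttcher-coordinate argument. Let $f$ be a polynomial of degree $m\ge 2$ with $C(f)\cap\Omega(f)=\emptyset$, and write $\Omega=\Omega(f)$. After a linear conjugation we may assume $f$ is monic; this does not change the topology of $\Omega$.

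\textbf{Step 1: the Böttcher map near infinity.} Near $\infty$ there is a conformal map $\phi$ with $\phi(z)=z+O(1)$ and
\[
\phi\circ f=\phi^{m}.
\]
It is defined on $U_R=\{|z|>R\}$ for $R$ large, by
\[
\phi(z)=\lim_{n\to\infty}\bigl(f^{n}(z)\bigr)^{1/m^{n}},
\]
with the branch chosen tangent to the identity. On $U_R$ we have $|\phi(z)|=e^{G(z)}$, where $G(z)=\lim_n m^{-n}\log|f^n(z)|$ is the Green function of $\Omega$ with pole at $\infty$. The function $G$ is harmonic and positive on $\Omega\setminus\{\infty\}$ and satisfies $G\circ f=mG$.

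\textbf{Step 2: extending $\phi$ to all of $\Omega$.} Define the sublevel regions
\[
\Omega_r=\{z:G(z)>r\}.
\]
We want to extend $\phi$ from $\Omega_r$ to $\Omega_{r/m}$. Since $f$ maps $\Omega_{r/m}$ onto $\Omega_r$ as a covering, we try to set $\phi(z)=\phi(f(z))^{1/m}$ there. The hard step is showing that this $m$-th root has a single-valued branch on $\Omega_{r/m}$ that agrees with the old $\phi$.

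The hypothesis $C(f)\cap\Omega=\emptyset$ gives two things on $\Omega_{r/m}\setminus\{\infty\}$:
\begin{itemize}
\item $f$ is a local homeomorphism there, since there are no critical points;
\item $G$ has no critical points there either, because $\nabla G(z)$ is a nonzero multiple of $\overline{f'(z)}\,\nabla G(f(z))$ when $G=m^{-1}G\circ f$.
\end{itemize}
By Morse theory for $G$, each $\Omega_{s}\cup\{\infty\}$ with $s>0$ is then a topological disk. Hence $f\colon \Omega_{r/m}\to\Omega_r$ is a branched cover of degree $m$ between disks, totally ramified only at $\infty$. By Riemann–Hurwitz it is equivalent to $w\mapsto w^m$, so the root extends uniquely and continuously.

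Iterating this extension defines $\phi$ on $\bigcup_n\Omega_{r/m^n}=\Omega$. The result is an injective holomorphic map onto $\{|w|>1\}$, because $|\phi|=e^{G}$ and $G\to 0$ at $\partial\Omega$, and the extension is injective at each stage.

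\textbf{Step 3: conclusion.} Since $\phi$ is a conformal map of $\Omega$ onto the simply-connected domain $\{|w|>1\}\cup\{\infty\}$, the domain $\Omega$ is simply-connected.

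The main obstacle is the single-valuedness in Step 2. I would handle it through the topology of the level sets of $G$: show that for every $s>0$ the level set $\{G=s\}$ is a single Jordan curve. This follows by downward induction on $s$, since no critical points of $G$ are ever encountered as $s$ decreases, so the level curves never pinch.
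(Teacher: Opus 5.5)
The paper gives no proof of this statement; it is quoted from the book of Carleson and Gamelin and used as a black box, so there is no internal argument to compare against. Your proof is correct and is the classical Green's-function/B\"ottcher argument. Note, though, that it is already complete at the Morse-theory step. Once every $\Omega_s\cup\{\infty\}$ is a disk, $\Omega$ is an increasing union of disks, and any loop in $\Omega$ lies in one of them. That step can also be done with Riemann--Hurwitz alone, by induction along $\Omega_{r/m^n}\cup\{\infty\}=f^{-n}(\Omega_r\cup\{\infty\})$. The preimage is connected, because every component maps properly onto $\Omega_r\cup\{\infty\}$ and hence contains the unique preimage $\infty$. It also has Euler characteristic $m\cdot 1-(m-1)=1$. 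This route avoids any properness or gradient analysis of $G$. What the extension of $\phi$ buys is more than simple connectivity: $1/\phi$ is a Riemann map $\Omega\to\mathbb{D}$ with $\infty\mapsto 0$ that conjugates $f$ to $z\mapsto z^m$, which is exactly the normalization of Lemma~\ref{central}. If you keep Step~2 as written, make three points explicit. First, $\nabla G\neq 0$ is a downward induction starting from a neighbourhood of $\infty$ where $G=\log|\phi|$; your formula only transports non-vanishing from $f(z)$ to $z$. Second, the Morse argument needs $G$ to be proper on $\Omega\setminus\{\infty\}$. Third, injectivity of the extension follows from a degree count: it is a proper holomorphic map onto $\{|w|>e^{r/m}\}\cup\{\infty\}$, and the only preimage of $\infty$ is the simple pole at $\infty$.
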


\begin{re}
    If $f=z^{m}+a_{0}$, we have $C(f)=\{0\}$, so $\Omega(f)$ is simply-connected if and only if $0\notin \Omega(f)$. This enables us to obtain more restrictions on the polynomial from simply-connectedness. In section 3 there's a simpler  proof for this special case in Theorem \ref{lct0}, see Theorem \ref{origin criteria}.
\end{re}

As a corollary, we obtain
\begin{co}
     For every $m\ge 3$, there exists $\delta=\delta(m)>0$ with the following property:\\
    Let $f(z)=z^{m}+\sum_{k=0}^{m-2}a_{k}z^{k}$, when $\left|a_{k}\right|<\delta$ for $0\le k\le m-1$, $\Omega(f)$ is simply-connected, and Brennan Conjecture holds for $\Omega(f)$.
\end{co}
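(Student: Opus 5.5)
The plan is to derive the corollary from Theorem \ref{more freedom on coefficients} and Theorem \ref{lct1}. We take $\delta$ to be at most the constant $\delta(m)$ of Theorem \ref{more freedom on coefficients}, possibly smaller. The statement contains an evident typo: the polynomial has no $z^{m-1}$ term, so $a_{m-1}=0$, and the condition $|a_k|<\delta$ should be read for $0\le k\le m-2$, now including $a_0$. Once $\Omega(f)$ is known to be simply-connected, the hypotheses of Theorem \ref{more freedom on coefficients} hold, since $|a_k|<\delta$ for $1\le k\le m-1$ (trivially for $k=m-1$). That theorem then gives the Brennan conjecture for $\Omega(f)$. So the real content is showing that $\Omega(f)$ is simply-connected when all coefficients are small.

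For simple connectivity we use Theorem \ref{lct1}: it suffices to show $C(f)\cap\Omega(f)=\emptyset$, that is, every finite critical point has bounded orbit. We proceed as follows.

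\begin{enumerate}
\item \textbf{A forward-invariant disc.} Take $R=1$ and shrink $\delta$ so that $f$ maps $\overline{\mathbb{D}}$ into itself. For $|z|\le 1$ we have
\[
|f(z)|\le |z|^m+\sum_{k=0}^{m-2}|a_k||z|^k\le |z|^m+(m-1)\delta .
\]
This is not quite enough near $|z|=1$, so we use a radius $r<1$ instead, for instance $r=1/2$. Then
\[
|f(z)|\le 2^{-m}+(m-1)\delta\le \tfrac12 \qquad\text{whenever } |z|\le \tfrac12,
\]
provided $\delta\le \frac{1}{2(m-1)}\bigl(1-2^{1-m}\bigr)$. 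Hence $\overline{D(0,1/2)}$ is forward invariant, and every point of it has bounded orbit.

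\item \textbf{The critical points lie in the disc.} The critical points are the roots of
\[
f'(z)=mz^{m-1}+\sum_{k=1}^{m-2}k a_k z^{k-1}.
\]
If $|z|\ge 1/2$ is such a root, then
\[
m|z|^{m-1}\le \sum_{k=1}^{m-2} k\delta |z|^{k-1}\le C_m\,\delta\, |z|^{m-2}
\]
(using $|z|\ge 1/2$ to compare powers, with harmless constants). This gives $|z|\le C_m\delta/m$, which is $<1/2$ for small $\delta$, a contradiction. More simply, Cauchy's root bound shows that all roots of $f'$ have modulus at most $\max\bigl(1, \sum_k k|a_k|/m\bigr)$ type quantities tending to $0$ as $\delta\to0$. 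In either case, for small $\delta$ we get $C(f)\subset D(0,1/2)$.
\end{enumerate}

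Combining the two steps, $C(f)\cap\Omega(f)=\emptyset$, so $\Omega(f)$ is simply-connected by Theorem \ref{lct1}. Applying Theorem \ref{more freedom on coefficients} then finishes the proof.

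The only mildly delicate point is the root localization for $f'$ in step 2. The cleanest route is Cauchy's bound: every root $w$ of the monic polynomial $f'/m$ satisfies
\[
|w|\le \max\Bigl\{1,\ \sum_{k=1}^{m-2}\frac{k|a_k|}{m}\Bigr\},
\]
and the refined form $|w|\le 2\max_j |c_j|^{1/(m-1-j)}$ (Fujiwara's bound) gives $|w|=O(\delta^{1/(m-1)})\to0$. Everything else is a direct estimate, and the final $\delta$ is the minimum of the three constraints: the one from Theorem \ref{more freedom on coefficients}, the invariance of the disc, and the critical-point localization.
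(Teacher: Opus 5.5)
Your proof is correct and is essentially the paper's argument. For small coefficients you find a forward-invariant closed disc around $0$ (radius $1/2$ for you, $1-\epsilon$ in the paper) that contains every critical point, so Theorem \ref{lct1} gives simple connectivity and Theorem \ref{more freedom on coefficients} then gives Brennan. Your explicit root estimate replaces the paper's appeal to continuity of roots, and you rightly include $\delta(m)$ from Theorem \ref{more freedom on coefficients} in the final minimum, which the paper's proof omits. One small slip: the plain Cauchy bound $\max\{1,\cdot\}$ does not tend to $0$, but your direct estimate, or Fujiwara's bound, does the work.
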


We prove this Corollary assuming that Theorem \ref{more freedom on coefficients} holds true.

\begin{proof}
    Choosing arbitrarily $0<\epsilon<\frac{1}{2}$, there exists some $\delta_{1}>0$ such that 
    $$(1-\epsilon)^{m}+\sum_{k=0}^{m-2}\delta_{1}(1-\epsilon)^{k}<1-\epsilon.$$
    Then whenever $\left|a_{k}\right|<\delta_{1}$ we have $\Omega(f)\cap \bar{\mathrm{B}}_{1-\epsilon}(0)=\emptyset$.
    Since the roots change continuously as the polynomial varies, there exists some $\delta_{2}>0$  such that all critical points of $f$ are inside the small disk $\mathrm{B}_{\epsilon}(0)$. Let $\delta=\min\{\delta_{1},\delta_{2}\}$
    Since $\epsilon<1-\epsilon$, we know that $\Omega(f)$ is simply-connected when $\left|a_{k}\right|<\delta$ by Theorem \ref{lct1}.
\end{proof}

The remainder of this paper is organized as follows.  
In section 2, we reduce the convergence of the integral to the convergence of the  series $\sum_{n=1}^{+\infty}m^{-4n}S_{n}$, where $S_{n}=\sum_{k=0}^{m^{n}-1}\left|(f^{n})'(w_{n,k})\right|^2$ and $w_{n,k}$ are in the backward orbit of a given point.
In section 3, we give a simpler proof of the special case of Theorem \ref{lct0} and then prove Theorem \ref{pure generalization of BVZ}, from which one can see that (\ref{Breenan}) holds even for some $p$ slightly larger than 4. This guides us to prove Theorem \ref{more freedom on coefficients} in section 4.

\section{Main reduction}
In this section, we follow the approach of Baranski, Volberg, and Zdunik to reduce the integral to a series, see Theorem \ref{reduction}. We assume $f$ to be a general polynomial of degree $m$, that is, not necessarily of the form $f(z)=z^{m}+a_{0}$.  

\subsection{Koebe's distortion theorem}
One of the theorems that need to be applied is Koebe's distortion theorem, which is shown in \cite{2019Koebe} and is stated as follows:
\begin{te}\label{Koebe}
    Let $h:\mathbb{D}\rightarrow\mathbb{C}$ be a univalent map with $h(0) = 0$ and $h'(0) = 1$. We have the following inequalities:\\
    $$\frac{\left|z\right|}{(1+\left|z\right|)^{2}}
            \le \left|h(z)\right|\le 
             \frac{\left|z\right|}{(1-\left|z\right|)^{2}}$$
             $$\frac{1-\left|z\right|}{1+\left|z\right|}
            \le \left|z\frac{h'(z)}{h(z)}\right|\le 
             \frac{1+\left|z\right|}{1-\left|z\right|}$$
             $$\frac{1-\left|z\right|}{(1+\left|z\right|)^{3}}
            \le \left|h'(z)\right|\le 
             \frac{1+\left|z\right|}{(1-\left|z\right|)^{3}}.$$
\end{te}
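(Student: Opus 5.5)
The plan is the classical one: first prove the coefficient bound $|a_2|\le 2$ for normalized univalent functions, and then derive all three estimates from it. Write $h(z)=z+a_2z^2+\cdots$. To get the bound on $a_2$, I would use the area theorem. The function $g(\zeta)=1/h(1/\zeta)$ is univalent on $|\zeta|>1$. Gronwall's area formula, i.e. the fact that the complement of $g(\{|\zeta|>1\})$ has nonnegative area, gives $\sum n|b_n|^2\le 1$ for $g(\zeta)=\zeta+b_0+\sum_{n\ge1} b_n\zeta^{-n}$, and in particular $|b_1|\le1$. Next I apply this to the odd square-root transform $\sqrt{h(z^2)}=z+\tfrac{a_2}{2}z^3+\cdots$, which is again univalent. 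That gives $|a_2/2|\le 1$, so $|a_2|\le 2$.

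The key step is to feed this into the Koebe transform. For fixed $z\in\mathbb{D}$, compose $h$ with the disk automorphism $\zeta\mapsto(\zeta+z)/(1+\bar z\zeta)$ and renormalize to get a new element of the class. Reading off its second coefficient and applying $|a_2|\le2$ yields
$$\left|\frac{zh''(z)}{h'(z)}-\frac{2|z|^2}{1-|z|^2}\right|\le\frac{4|z|}{1-|z|^2}.$$
Taking real parts and writing $z=re^{i\theta}$, this bounds $r\,\partial_r\log|h'(re^{i\theta})|$ between $(2r^2-4r)/(1-r^2)$ and $(2r^2+4r)/(1-r^2)$. Integrating in $r$ from $0$ gives the third pair of inequalities, $\frac{1-r}{(1+r)^3}\le|h'|\le\frac{1+r}{(1-r)^3}$.

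For the first pair, the upper bound on $|h|$ follows by integrating $|h'|$ along the radius. For the lower bound, I would use the Koebe $1/4$-theorem, which itself comes from $|a_2|\le2$ applied to $\frac{wh}{w-h}$ for an omitted value $w$. Combining it with the lower derivative bound integrated along the preimage of the segment $[0,h(z)]$ gives $|h(z)|\ge \int_0^r\frac{1-t}{(1+t)^3}dt=\frac{r}{(1+r)^2}$.

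The second pair, $\frac{1-r}{1+r}\le\left|z h'/h\right|\le\frac{1+r}{1-r}$, is the step I expect to be the main obstacle, because it does not follow by simply dividing the other two bounds. My plan is to apply the first pair to the Koebe transform $h_z$ evaluated at the point $-z$, which is mapped back to $0$. This gives
$$\frac{|z|}{(1+|z|)^2}\le\frac{|h(z)|}{(1-|z|^2)|h'(z)|}\le\frac{|z|}{(1-|z|)^2},$$
and rearranging produces exactly the stated bounds on $|zh'(z)/h(z)|$. Alternatively, since the paper cites \cite{2019Koebe}, one could take the whole theorem as standard and only indicate this outline.
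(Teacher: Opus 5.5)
Your outline is correct. It is the standard textbook proof: Bieberbach's bound $|a_2|\le 2$ from the area theorem applied to the odd square-root transform, then the Koebe transform giving the estimate on $zh''/h'$, then radial integration for the derivative bounds and the upper growth bound. The lower growth bound comes from the $1/4$-theorem; there the case $|h(z)|\ge\frac14$ is trivial because $\frac{r}{(1+r)^2}<\frac14$, and otherwise the segment $[0,h(z)]$ lies in $h(\mathbb{D})$ and $|d\zeta|\ge d|\zeta|$ along its preimage. Finally, the growth theorem applied to the Koebe transform at $-z$ gives the bounds on $|zh'/h|$.

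The paper gives no proof of its own and simply quotes the theorem from the cited reference. Your closing suggestion to cite it is therefore exactly what the paper does.
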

With Koebe's distortion theorem, when the unit disk $\mathbb{D}$ is partitioned into several regions, we are able to estimate the derivative at points in one region by its value at a specific  point within that region.

\subsection{Reduction process}
In order to apply Koebe's distortion theorem, we need to invert the biholomorphic map. Let $R=F^{-1}:\mathbb{D}\rightarrow\Omega$ be the inverse of $F$.
By the inverse function theorem and a change of variables, we have 

$$\int_{\Omega}\left|F'\right|^{p}=
\int_{\mathbb{D}}\left|R'\right|^{2-p}.$$

After its composition with the Mobius map, we may assume $R(0)=\infty$, then $R$ has a first-order pole at $0$.
In our case, the domain $\Omega=\Omega(f)$, let $g=R^{-1}\circ f \circ R:\mathbb{D}\rightarrow \mathbb{D}$ be the conjugation of $f$ with $R$. We have the following lemma.

\begin{lm}\label{central}
If $f$ is a polynomial of degree $m\ge 2$, then $g(z)=az^{m}$ for some constant $a$ with $\left|a\right|=1$.
\end{lm}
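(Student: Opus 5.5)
The plan is to show that $g$ is a finite Blaschke product of degree $m$ all of whose zeros lie at the origin, and then to read off its form.

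\textbf{Step 1: $g$ is a proper self-map of degree $m$.} Since $\Omega=\Omega(f)$ is completely invariant, we have $f^{-1}(\Omega)=\Omega$. Because $f$ is a polynomial of degree $m$, the restriction $f\colon\Omega\to\Omega$ is a proper holomorphic map of degree $m$; here $\infty$ is a fully ramified fixed point with local degree $m$. Conjugating by the biholomorphism $R\colon\mathbb{D}\to\Omega$ shows that $g=R^{-1}\circ f\circ R\colon\mathbb{D}\to\mathbb{D}$ is proper and holomorphic of degree $m$. By the standard classification of proper self-maps of the disk, $g$ is a finite Blaschke product of degree $m$:
$$g(z)=a\prod_{j=1}^{m}\frac{z-z_j}{1-\bar z_j z},\qquad |a|=1.$$

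\textbf{Step 2: locate the zeros.} Since $R(0)=\infty$ and $f(\infty)=\infty$, we get $g(0)=R^{-1}(f(\infty))=0$. Moreover, $g^{-1}(0)$ corresponds under $R$ to $f^{-1}(\infty)\cap\Omega$, and this set is $\{\infty\}$ only, because a polynomial has no finite preimages of $\infty$. Hence every zero of $g$ lies at $0$, and counting multiplicity there are $m$ of them. One can check the multiplicity directly: near $\infty$, $f$ has local degree $m$, and $R$ and $R^{-1}$ are local biholomorphisms (conformal at $0$ and at $\infty$ as maps of the sphere). So $g$ has a zero of order exactly $m$ at $0$, which gives $z_1=\dots=z_m=0$ and therefore $g(z)=az^m$ with $|a|=1$.

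\textbf{Main obstacle.} The only real work is Step 1, namely justifying properness and the Blaschke product classification. Properness follows because $f$ maps $\partial\Omega$, which is the Julia set, into $\partial\Omega$. So if $z_n\to\partial\Omega$ inside $\Omega$, then $f(z_n)\to\partial\Omega$, using continuity of $f$ on the sphere and complete invariance of the Julia set. Transported to the disk, this says that $|g(z)|\to1$ as $|z|\to1$.

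If we want to avoid quoting the classification, there is a direct route. Since $|g|\to1$ at the boundary and all zeros of $g$ are at $0$ with total order $m$ (Step 2), the function $h=g/z^m$ is holomorphic and zero-free on $\mathbb{D}$ with $|h|\to1$ at the boundary. Applying the maximum principle to both $h$ and $1/h$ gives $|h|\equiv1$, so $h$ is a unimodular constant $a$, which finishes the proof.
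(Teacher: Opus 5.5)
Your proposal is correct and is essentially the paper's argument: form $h=g/z^{m}$, show it is zero-free with $|h|\to 1$ at $\partial\mathbb{D}$, and apply the maximum principle to $h$ and $1/h$. The Blaschke-product phrasing is only a wrapper around this, and your version is in fact more careful than the paper in two places. You derive zero-freeness of $h$ from $f^{-1}(\infty)=\{\infty\}$, whereas the paper infers it from $h(0)\neq 0$ alone, which does not follow. You also use properness ($|g(z)|\to 1$ as $|z|\to 1$) instead of a continuous boundary extension of $R$, which would require local connectivity of $\partial\Omega(f)$.
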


\begin{proof}
Since $R$ has a first-order pole at $0$, $\lim_{z\rightarrow 0}zR(z)=b_{0}$ for some non-zero constant $b_{0}\in\mathbb{C}$.  
Since $f$ is a polynomial of degree $m$, $\lim_{z\rightarrow +\infty}\frac{f(z)}{z^{m}}=b_{1}$ is the leading coefficient of $f$.  
Let $h(z)=\frac{g(z)}{z^{m}}$.  Then $h$ has a removable singularity at $0$ and has value $\frac{1}{b_{1}b_{0}^{m-1}}$ there, so $h$ is nowhere zero on $\mathbb{D}$.  
Since $f$ and $R$ can be extended to the boundary $\partial\Omega$ and $\partial\mathbb{D}$ respectively, so can $h$ to $\partial\mathbb{D}$.  
For any $z\in\partial\mathbb{D}$, we have $\left|h(z)\right|=1$ by definition, so applying maximal principle to $h$ and $h^{-1}$, we know that $h$ is a constant and the claim follows.
\end{proof}

And what's more, after the conjugation by a rotation, we may assume $a=1$ in Lemma \ref{central}.

It is natural to divide the unit disk into fundamental domains of the action $g:\mathbb{D}\rightarrow\mathbb{D}$. To be precise, we partition the unit disk as follows.

Fix a real number $0<r_{0}<1$, let 
$$P_{0}=\{z\in \mathbb{C}~|~r_{0}\le \left|z\right|\le r_{0}^{\frac{1}{m}} \} $$

and 
$$P_{n}=g^{-n}(P_{0})=\{z\in \mathbb{C}~|~
r_{0}^{\frac{1}{m^{n}}}\le \left|z\right|\le r_{0}^{\frac{1}{m^{n+1}}} \}.$$
Fix a point $z_{0}$ with $\left|z_{0}\right|=r_{0}$. For each $n\ge 0$ there are $m^{n}$ preimages of $z_{0}$ under $g^{n}$, which are equally distributed, so we further divide  $P_{n}$ into $m^{n}$ parts as follows:
$$P_{n,k}=\left\{re^{i\theta}|r_{0}^{\frac{1}{m^{n}}}\le r\le r_{0}^{\frac{1}{m^{n+1}}},k\frac{2\pi}{m^{n}}\le\theta<(k+1)\frac{2\pi}{m^{n}}\right\}$$
for $0\le k<m^{n}$, and let $z_{n,k}\in P_{n,k}$ be preimages of $z_{0}$ under $g^{n}$.

The fact that $$\lim_{n\rightarrow+\infty}\frac{1-r_{0}^{\frac{1}{m^{n}}}}{\frac{1}{m^{n}}}=-\log(r_{0})$$ tells us that there exist $M<+\infty$ and $\gamma>0$ such that we can cover each region $P_{n,k}$ by $M$ disks $\mathrm{B}_{(1-\gamma)t}(x)$ with $\mathrm{B}_{t}(x)\subseteq\mathbb{D}\setminus\{0\}$, so we obtain 

\begin{align}\label{one point estimate}
    (2\gamma^{-3})^{-M}\left|R'(z_{n,k})\right|
\le\left|R'(z)\right|
\le(2\gamma^{-3})^{M}\left|R'(z_{n,k})\right|
\end{align}
 using Koebe's distortion theorem.

Denote by $w_{0}=R(z_{0})$ and $w_{n,k}=R(z_{n,k})$, which means that $w_{n,k}$ ($0\le k<m^{n}$) are preimages of $w_{0}$ under $f^{n}$.\\

Applying the the chain rule to the diagram $R\circ g^{n}=f^{n}\circ R$ on $P_{n}$, we have the approximation 
$$C_{1}\frac{m^{n}}{\left|(f^{n})'(w_{n,k})\right|}
\le\left|R'(z_{n,k})\right|\le 
C_{2}\frac{m^{n}}{\left|(f^{n})'(w_{n,k})\right|}$$
for some constants $0<C_{1}<C_{2}<+\infty$ depending only on $m$ and $r_{0}$.

Observe that (\ref{Breenan}) for $p=4$ implies that for $2<p<4$, as $u^{p}<u^{2}+u^{4}$ for $u>0$, so we can assume $p=4$ and then the integral equals $\int_{\mathbb{D}}\left|R'\right|^{-2}$.\\

Denote by $S_{n}=\sum_{k=0}^{m^{n}-1}\left|(f^{n})'(w_{n,k})\right|^2$, we reach the following result using (\ref{one point estimate}):

\begin{te}\label{reduction}
    There exists constants $C_{1},C_{2}<+\infty$ depending only on $m$ and $r_{0}$ such that for any $2\le p\le 4$, $\int_{\Omega}\left|F'\right|^{p}<C_{1}\sum_{n=1}^{+\infty}m^{-4n}S_{n}+C_{2}$.
\end{te}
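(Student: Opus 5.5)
We reduce to the case $p=4$ and then decompose the disk along the fundamental domains $P_{n,k}$, using (\ref{one point estimate}) and the two-sided comparison between $|R'(z_{n,k})|$ and $m^{n}/|(f^{n})'(w_{n,k})|$.

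\textbf{Reduction to $p=4$.} By the change of variables above,
\[
\int_{\Omega}\left|F'\right|^{p}=\int_{\mathbb{D}}\left|R'\right|^{2-p}.
\]
For $2\le p\le 4$ and $u>0$ we have $u^{p}\le u^{2}+u^{4}$ (split according to $u\le 1$ or $u\ge 1$), with $u=|F'|$. Hence
\[
\int_{\Omega}|F'|^{p}\le \pi+\int_{\mathbb{D}}|R'|^{-2}.
\]
So it suffices to bound $\int_{\mathbb{D}}|R'|^{-2}$.

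\textbf{Splitting the disk.} Write
\[
\mathbb{D}=\{|z|<r_{0}\}\cup\bigcup_{n\ge0}P_{n}.
\]
On the compact set $\{|z|\le r_{0}\}$ the function $|R'|^{-2}$ is continuous and vanishes at $0$, because $R$ has a simple pole there and so $|R'(z)|\sim|b_{0}|/|z|^{2}$. It is finite elsewhere since $R'\neq0$ for univalent $R$. Thus this part contributes a constant depending only on $r_{0}$ and $R$. The same reasoning bounds the contribution of $P_{0}$ by a constant.

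\textbf{Estimating each $P_{n,k}$.} For $n\ge1$, combine (\ref{one point estimate}) with the bound
\[
|R'(z_{n,k})|\ge C_{1}m^{n}/|(f^{n})'(w_{n,k})|.
\]
This gives, for $z\in P_{n,k}$,
\[
|R'(z)|^{-2}\le (2\gamma^{-3})^{2M}C_{1}^{-2}m^{-2n}|(f^{n})'(w_{n,k})|^{2}.
\]
The area of $P_{n,k}$ is at most
\[
\frac{2\pi}{m^{n}}\cdot\bigl(1-r_{0}^{1/m^{n+1}}\bigr)\le C\,m^{-2n},
\]
using $1-r_{0}^{1/m^{n}}\sim -\log r_{0}/m^{n}$. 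Summing over $k$ yields
\[
\int_{P_{n}}|R'|^{-2}\le C' m^{-4n}S_{n},
\]
and summing over $n\ge1$ gives the claim.

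\textbf{The main obstacle.} The key step is justifying the comparison $|R'(z_{n,k})|\asymp m^{n}/|(f^{n})'(w_{n,k})|$, including the normalization of $g$ to $z^{m}$ via Lemma \ref{central}. Differentiating $R\circ g^{n}=f^{n}\circ R$ at $z_{n,k}$ gives
\[
R'(g^{n}(z_{n,k}))\,(g^{n})'(z_{n,k})=(f^{n})'(w_{n,k})\,R'(z_{n,k}).
\]
Here $g^{n}(z_{n,k})=z_{0}$, so $R'(z_{0})$ is a fixed nonzero constant. Also
\[
|(g^{n})'(z_{n,k})|=m^{n}|z_{n,k}|^{m^{n}-1}=m^{n}r_{0}/|z_{n,k}|,
\]
which lies between $m^{n}r_{0}$ and $m^{n}$. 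Taking absolute values gives the comparison with constants depending only on $m$ and $r_{0}$, together with the fixed value $|R'(z_{0})|$, which is absorbed into the constants.

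The covering by $M$ Koebe disks, with $M$ and $\gamma$ uniform in $n$ and $k$, follows because each $P_{n,k}$ has radial and angular size both comparable to $m^{-n}$ and lies at distance about $m^{-n}$ from $\partial\mathbb{D}$.
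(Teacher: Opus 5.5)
Your proposal is correct and follows the same route as the paper: reduce to $p=4$ via $u^{p}\le u^{2}+u^{4}$, tile $\mathbb{D}$ by the $P_{n,k}$, and combine (\ref{one point estimate}) with the chain-rule comparison $|R'(z_{n,k})|\asymp m^{n}/|(f^{n})'(w_{n,k})|$. Along the way you also fill in details the paper leaves implicit, namely the disk $|z|\le r_0^{1/m}$, the area of $P_{n,k}$, and the explicit value of $|(g^{n})'(z_{n,k})|$.

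Two small remarks. First, the radial width of $P_{n,k}$ is $r_0^{1/m^{n+1}}-r_0^{1/m^{n}}\le 1-r_0^{1/m^{n}}$, not $1-r_0^{1/m^{n+1}}$. Your displayed area inequality is therefore off by a factor of about $m-1$, although $\mathrm{area}(P_{n,k})\le Cm^{-2n}$ is still true. Second, exactly as in the paper's own argument, your constants actually depend on $|R'(z_0)|$ and on the integral over $|z|\le r_0^{1/m}$, hence on $f$, not only on $m$ and $r_0$.
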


\begin{re}
    According to the above approximation, it suffices to show that $$\limsup_{n\rightarrow+\infty}\frac{S_{n+1}}{S_{n}}<m^{4},$$ as the chain rule helps us derive the relation between $S_{n}$ and $S_{n+1}$.
\end{re}

Since $(f^{n+1})'(z)=(f^{n})'(f(z))f'(z)$, we have
\begin{align}
    S_{n+1}=&\notag\sum_{k=0}^{m^{n+1}-1}\left|(f^{n+1})'(w_{n+1,k})\right|^{2}\\
    =&\notag\sum_{k=0}^{m^{n+1}-1}\left|(f^{n})'(f(w_{n+1,k}))f'(w_{n+1,k})\right|^{2}\\
    =&\notag\sum_{k=0}^{m^{n+1}-1}\left|(f^{n})'(f(w_{n+1,k}))\right|^{2}
    \left|f'(w_{n+1,k})\right|^{2}\\
    =&\notag\sum_{k=0}^{m^{n}-1}\left|(f^{n})'(f(w_{n,k}))\right|^{2}
    \sum_{w\in\mathbb{C},f(w)=w_{n,k}}
    \left|f'(w)\right|^{2}\\
    \le&\notag\sum_{k=0}^{m^{n}-1}\left|(f^{n})'(f(w_{n,k}))\right|^{2}
    \max\{
    \sum_{w\in\mathbb{C},f(w)=w_{n,k}}
    \left|f'(w)\right|^{2}
    |0\le k<m^{n}\}\\
    =&\notag S_{n}\max\{
    \sum_{w\in\mathbb{C},f(w)=w_{n,k}}
    \left|f'(w)\right|^{2}\}.
\end{align}

\begin{lm}\label{closetoboundary}
    For any compact subset $K\subseteq \Omega(f)$, we can find $N\in \mathbb{N}$ such that for any $n\ge N$, $w_{n,k}\notin K$ for every $k$.
\end{lm}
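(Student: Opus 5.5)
The plan is to transport the problem to the unit disk through the conformal map $R$ and use the explicit form of $g$ from Lemma \ref{central}. Recall that $w_{n,k}=R(z_{n,k})$, where $z_{n,k}$ are the preimages of $z_{0}$ under $g^{n}$. After the normalization $a=1$ we have $g(z)=z^{m}$. Hence every $z_{n,k}$ satisfies $z_{n,k}^{m^{n}}=z_{0}$, and so
$$\left|z_{n,k}\right|=r_{0}^{\frac{1}{m^{n}}}\longrightarrow 1 \quad (n\to+\infty),$$
uniformly in $k$.

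Now let $K\subseteq\Omega(f)$ be compact. Since $R:\mathbb{D}\to\Omega$ is a homeomorphism (with $R(0)=\infty$, interpreting $\Omega$ in $\hat{\mathbb{C}}$), the set $F(K)=R^{-1}(K)$ is a compact subset of the open disk $\mathbb{D}$. Therefore there is some $\rho<1$ with $F(K)\subseteq \bar{\mathrm{B}}_{\rho}(0)$. If $\infty\in K$, this still holds, because $R^{-1}(\infty)=0$.

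Choose $N$ such that $r_{0}^{1/m^{N}}>\rho$; this is possible because $r_{0}^{1/m^{n}}$ increases to $1$. Then for every $n\ge N$ and every $k$ we have $\left|z_{n,k}\right|=r_{0}^{1/m^{n}}\ge r_{0}^{1/m^{N}}>\rho$. Thus $z_{n,k}\notin F(K)$, and consequently $w_{n,k}=R(z_{n,k})\notin K$.

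No step is a real obstacle. The only points needing care are two. First, the identification of $w_{n,k}$ with $R(z_{n,k})$ must be consistent with $g=R^{-1}\circ f\circ R$, which the paper has already set up. Second, compactness must be preserved under the homeomorphism $R^{-1}$. Since $R$ is a biholomorphism onto $\Omega$, the image $R^{-1}(K)$ is compact in $\mathbb{D}$, and so it stays a positive distance from $\partial\mathbb{D}$.
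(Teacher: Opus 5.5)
Your proof is correct, but it works on the disk side, whereas the paper argues directly in the dynamical plane.

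The paper chooses a neighbourhood $U$ of $\infty$ with $f(U)\subseteq U$ and $w_{0}\notin U$. The increasing open sets $f^{-n}(U)$ exhaust $\Omega(f)$, so compactness gives $K\subseteq f^{-N}(U)$. No point of $K$ can then be an $n$-th preimage of $w_{0}$ with $n\ge N$, since forward invariance of $U$ would force $w_{0}\in U$. That argument uses only the behaviour of $f$ near $\infty$ and the relation $f^{n}(w_{n,k})=w_{0}$. It needs neither the Riemann map nor the form of $g$, so it works for arbitrary backward orbits and even for basins that are not simply connected.

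You instead use Lemma \ref{central} (in fact only $|g(z)|=|z|^{m}$) to place every $z_{n,k}$ on the circle $|z|=r_{0}^{1/m^{n}}$, and then use the compactness of $R^{-1}(K)$ in $\mathbb{D}$. This is shorter and gives more. It says $-\log|F(w_{n,k})|=m^{-n}\log(1/r_{0})$, i.e.\ the Green's function of $\Omega$ with pole at $\infty$ decays geometrically along the backward orbit, which is a quantitative version of the lemma.

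The cost is the dependence on Lemma \ref{central}. That dependence is harmless because the lemma is true: $g$ is a proper holomorphic self-map of $\mathbb{D}$ of degree $m$ whose only zero is $0$, of order $m$, hence a Blaschke product $az^{m}$. Note, however, that the paper's own proof of Lemma \ref{central} invokes a boundary extension of $R$, which is not automatic. The paper's proof of the present lemma avoids that issue entirely.
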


\begin{proof}
Choose a neighborhood $U$ of $\infty$ such that
$\infty\in U\subseteq \Omega(f)$, $w_{0}\notin U$, and 
$f^{-1}(U)\supseteq U$.
From the definition of $\Omega(f)$, we know that $\cup_{n=0}^{+\infty} f^{-n}(U)=\Omega(f)\supseteq K$.
By the compactness of $K$, we can find $N\in \mathbb{N}$ such that $\cup_{n=0}^{N} f^{-n}(U)\supseteq K$, that is, $f^{-N}(U)\supseteq K$ since $f^{-n}(U)\subseteq f^{-(n+1)}(U)$.
So $w_{n,k}\notin K$ for every $k$ when $n\ge N$.  
\end{proof}

\begin{re}
    Lemma \ref{closetoboundary} implies that for large $n$ the points $w_{n,k}$ lie arbitrarily close to $\partial\Omega$ .
\end{re}

\section{The calculation for specific polynomials}
In this section, we prove Theorem \ref{pure generalization of BVZ}. From the expression of $f$ we have $f(z)=f(\exp(\frac{2\pi ki}{m})z)$ for $k\in\mathbb{Z}$, so 
$\Omega(f)=\exp(\frac{2\pi ki}{m})\Omega(f)$, which means that $\Omega(f)$ is rotationally symmetric about the origin.
Since $t^{m}-t$ is an increasing function for $t\in [1,+\infty)$, we have $\Omega(f)\supseteq 
\{z\in\mathbb{C}~|~\left|z\right|>\alpha(\left|a_{0}\right|)\}$, where $\alpha(a)$ is the root of $t^{m}-t-a=0$ in $[1,+\infty)$ for $a\ge 0$.

As is pointed out by in \cite{1} Baranski, Volberg, and Zdunik, the next lemma follows from Theorem \ref{lct0}, but here we give a simpler and more elementary proof.

\begin{lm}\label{origin criteria}
    If $\Omega(f)$ is simply connected,  then $0\notin \Omega(f)$.
\end{lm}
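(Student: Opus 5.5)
We argue by contrapositive: assume $0\in\Omega(f)$ for $f(z)=z^{m}+a_{0}$, and exhibit a Jordan curve in $\Omega(f)$ that separates two points of the filled Julia set $K(f)=\hat{\mathbb{C}}\setminus\Omega(f)$. Then $\Omega(f)$ is not simply connected. The tool is the Green's function $G(z)=\lim_{n\to\infty}m^{-n}\log|f^{n}(z)|$ on $\Omega(f)$. It is harmonic on $\Omega(f)\setminus\{\infty\}$, positive there, tends to $0$ at $\partial\Omega(f)$, and satisfies $G(f(z))=mG(z)$. Existence of the limit is elementary: $|f^{n}(z)|\to\infty$, and $\log|z^{m}+a_{0}|=m\log|z|+O(|z|^{-m})$ for large $|z|$, so the terms telescope. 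Since $0\in\Omega(f)$, set $c=G(0)>0$.

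The key step is to look at the level set $\{G=c\}$. Its critical points come only from critical points of $f^{n}$ inside $\Omega(f)$. These are $0$ and the preimages $f^{-j}(0)$, and $0$ has the largest value of $G$ among them, because $G(f^{-j}(0))=m^{-j}c<c$. Now consider the set $U=\{z\in\Omega(f): G(z)>c\}\cup\{\infty\}$. Using the rotational symmetry $z\mapsto e^{2\pi i/m}z$, $f$ maps $U$ onto $\{G>mc\}\cup\{\infty\}$ as an unbranched covering of degree $m$, since $U$ contains no critical point. The target is a punctured-disk-like neighbourhood of $\infty$, which is simply connected and contains $\infty$, the fully ramified point. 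Hence $U$ is a topological disk about $\infty$, and its boundary $\{G=c\}$ passes through $0$. Near $0$ we have $G(z)=c+\mathrm{Re}(\lambda z^{m})+\dots$ with $\lambda\neq0$, because $G=m^{-1}G\circ f$ and $f(z)-a_{0}=z^{m}$. So $\{G=c\}$ is locally $2m$ arcs through $0$, making a figure-eight-like shape with $m$ petals. In terms of the conjugacy, $\{G\le c\}$ splits into $m$ closed pieces $D_{1},\dots,D_{m}$, permuted by the rotation and meeting only at $0$. Each piece contains points of $K(f)$: $K(f)$ is nonempty and invariant under the rotation, it is not $\{0\}$ alone since $0\notin K(f)$, and it lies in $\{G\le c\}$.

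With the petals in hand, we take a small loop around $0$ that follows $\{G=c-\varepsilon\}$ within one petal $D_{1}$ together with nearby level curves. More simply, $\{G=c/2\}\cap D_{1}$ contains a Jordan curve around $D_{1}\cap K(f)$ that lies entirely in $\Omega(f)$. This curve separates the points of $K(f)$ in $D_{1}$ from those in $D_{2}$, so $\Omega(f)$ is not simply connected.

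The main obstacle is justifying the topology of $\{G=c\}$ near $0$, namely that the $m$ components of $\{G<c\}$ are genuinely distinct and each meets $K(f)$. We handle this with the map $f$ itself. It sends $\{G<c\}$ onto $\{G<mc\}\setminus\{\text{the disk component}\}$, and since the only critical value in play is $a_{0}=f(0)$, which lies on the level $mc$ rather than inside $\{G<mc\}$, $f$ is a covering of degree $m$ from $\{G<c\}$ onto the region $\{G<mc\}\ni K(f)$. That region is simply connected (it is a disk containing $K$), so the covering is trivial and $\{G<c\}$ has exactly $m$ components. Each component maps onto the region and therefore contains preimages of $K(f)$, which lie in $K(f)$.
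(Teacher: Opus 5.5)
Your argument is correct in substance, but it takes a genuinely different route from the paper.

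\textbf{The paper's route.} The proof there is purely topological. It uses only four facts: $\Omega(f)$ is path-connected, it is invariant under $z\mapsto e^{2\pi i/m}z$, it contains both fixed points $0$ and $\infty$ of this rotation, and its complement $V$ is nonempty. A path $\gamma\subset\Omega(f)$ from $0$ to the circle $|z|=r_1$, together with its $m$ rotations and that circle, gives a rotation-invariant set $E\subset\Omega(f)$ that separates $V$. No Green's function, critical-point analysis or covering theory is involved, which is why the author presents it as more elementary than invoking Theorem \ref{lct0}.

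\textbf{Your route.} You give the standard potential-theoretic argument. With $c=G(0)>0$, the map $f\colon\{G<c\}\to\{G<mc\}$ is proper. It has no critical points there because $G(0)=c$, so it is an $m$-sheeted covering of a simply connected region, hence trivial. Therefore $\{G<c\}$ has $m\ge 2$ components, each containing points of $K(f)=f^{-1}(K(f))$, and $K(f)$ is disconnected. This needs more machinery, but it yields more: each component of $\{G<c\}$ maps homeomorphically onto $\{G<mc\}$, which iterates to the Cantor structure of $K(f)$. It also rests on $0$ being the only critical point rather than on the symmetry.

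\textbf{Things to tidy up.}
\begin{enumerate}
\item You assert, without proof, that $\{G<mc\}$ is simply connected, and likewise $\{G>mc\}\cup\{\infty\}$, which you use for $U$. The quickest justification: $G$, extended by $0$ on $K(f)$, is continuous and subharmonic on $\mathbb{C}$. By the maximum principle, the inside of any Jordan curve in $\{G<mc\}$ stays in $\{G<mc\}$. By the minimum principle, every component of $\{G<mc\}$ meets $K(f)$, so you do not even need $\{G<mc\}$ to be connected.
\item Your last paragraph already finishes the proof, since the complement in $\hat{\mathbb{C}}$ of a simply connected domain is connected. The detour through $U$, the petals and a Jordan curve in $\{G=c/2\}$ can be dropped.
\item That detour is also wrong as stated for $m=2$. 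There $c/2=c/m$ is a critical value of $G$, because the points of $f^{-1}(0)$ lie on that level. You would have to use a level in $(c/m,c)$ instead.
\item $f$ maps $\{G<c\}$ onto all of $\{G<mc\}$; the phrase ``minus the disk component'' should be removed.
\end{enumerate}
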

\begin{proof}
First note that since $\deg(f)=m\ge 2$, we can find $w\in \mathbb{C}$ such that $f(w)=w$ and since $w\notin \Omega(f)$, its complement $V=\hat{\mathbb{C}}\backslash\Omega(f)$ is non-empty.

Assume the contrary that $0\in \Omega(f)$, then $0\notin V$.  
Choose $0<r_{1}<+\infty$ such that $V\subseteq \mathrm{B}_{r_{1}}(0)$.

Since $\Omega(f)$ is simply connected, we can find a path 
$\gamma :[0, 1]\rightarrow \Omega(f)$ with 
$\gamma(0)=0$,  $\gamma(1)=r_{1}$. Consider the set

$$E=\{z\in \mathbb{C}\mid \left|z\right|=r_{1}\}\cup 
\Big(\bigcup_{l=0}^{m-1}\big(\exp(l\frac{2\pi i}{m})\gamma([0, 1])\big)\Big).$$

$E$ divides $V$ into at least $m\ge 2$ parts,  which means that $V$ is not connected,  contradicting the simply connectedness of $\Omega(f)$.
So we must have $0 \notin \Omega(f)$.
\end{proof}

By Theorem \ref{reduction}, to prove Theorem \ref{pure generalization of BVZ} it suffices to show that $\{S_{n}\}_{n}$ form a geometric series, to be precise, $\limsup_{n\rightarrow+\infty}\frac{S_{n+1}}{S_{n}}<m^{4}$.

From $0 \notin \Omega(f)$ we get $a_{0}=f(0) \notin \Omega(f)$, so $\left|a_{0}\right|\le \alpha(\left|a_{0}\right|)$. By monotonicity of $t^{m}-t$, we have $\left|a_{0}\right|^{m}-\left|a_{0}\right|\le \left|a_{0}\right|$, which gives 
$\left|a_{0}\right|\le 2^{\frac{1}{m-1}}$.

Because $f(z)=z^{m}+a_{0}$, it follows that the derivative $f'(z)=mz^{m-1}$ has a nice form. This also implies the following relations: 
$$f(u)=v\Longrightarrow \left|u\right|^{m}=\left|v-a_{0}\right|
\Longrightarrow 
\left|f'(u)\right|=m\left|u\right|^{m-1}=m\left|v-a_{0}\right|^{\frac{m-1}{m}}.$$   
So we can proceed as follows.

For any $\epsilon>0$, by Lemma \ref{closetoboundary}, there is $N\in \mathbb{Z}_{\ge 1}$ such that for any $n\ge N$, $0\le k<m^{n}-1$, we have $\left|w_{n,k}\right|<\alpha(\left|a_{0}\right|)+\epsilon\le 2^{\frac{1}{m-1}}+\epsilon$.For any $n\ge N$, we have 
\begin{align}
    S_{n+1}&\notag=\sum_{k=0}^{m^{n}-1}\left|(f^{n})'(w_{n,k})\right|^{2}
    \sum_{w\in\mathbb{C},f(w)=w_{n,k}}
    \left|f'(w)\right|^{2}\\
    &\notag=m^{3}\sum_{k=0}^{m^{n}-1}
    \left|(f^{n})'(w_{n,k})\right|^{2}\left|w_{n,k}-a_{0}\right|^{\frac{2(m-1)}{m}}.
\end{align}

So we obtain  the following estimate

\begin{align}
    S_{n+1}\le m^{3}\sum_{k=0}^{m^{n}-1}
    \left|(f^{n})'(w_{n,k})\right|^{2}(2^{\frac{m}{m-1}}+\epsilon)^{\frac{2(m-1)}{m}}\le (4+c_{1}\epsilon+c_{2}\epsilon^{2})m^{3}S_{n} 
\end{align}
for some constants $c_{1},c_{2}$, which are independent of $m$(though we do not use this fact).\\

When $m\ge 5$, we have 
$$\limsup_{n\rightarrow+\infty}\frac{S_{n+1}}{S_{n}}\le 
4m^{3}<m^{4}$$
letting $\epsilon\rightarrow 0$, and Theorem \ref{pure generalization of BVZ} is proved. 

This estimate is not precise enough when $m=3,4$. For this we develop a sharper estimate, which is sufficient for all $m\ge 3$

\begin{align}
    S_{n+1}&\notag=m^{2}\sum_{k=0}^{m^{n}-1}\sum_{l=0}^{m-1}
    \left|(f^{n})'(\exp(l\frac{2\pi i}{m})w_{n,k})\right|^{2}\left|\exp(l\frac{2\pi i}{m})w_{n,k}-c\right|^{\frac{2(m-1)}{m}}\\
    &\notag=m^{2}\sum_{k=0}^{m^{n}-1}\sum_{l=0}^{m-1}
    \left|(f^{n})'(w_{n,k})\right|^{2}\left|\exp(l\frac{2\pi i}{m})w_{n,k}-c\right|^{\frac{2(m-1)}{m}}\\
    &\notag\le m^{3}\sum_{k=0}^{m^{n}-1}\left|(f^{n})'(w_{n,k})\right|^{2}
    (\frac{1}{m}\sum_{l=0}^{m-1}\left|\exp(l\frac{2\pi i}{m})w_{n,k}-c\right|^{2})^{\frac{m-1}{m}}\\
    &\notag=m^{3}\sum_{k=0}^{m^{n}-1}\left|(f^{n})'(w_{n,k})\right|^{2}
    (\left|w_{n,k}\right|^{2}
    +\left|c\right|^{2})
    ^{\frac{m-1}{m}}\\
    &\notag\le m^{3}\sum_{k=0}^{m^{n}-1}\left|(f^{n})'(w_{n,k})\right|^{2}
    ((2^{\frac{1}{m-1}}+\epsilon)^{2}
    +2^{\frac{2}{m-1}})
    ^{\frac{m-1}{m}}\\
    &\notag\le m^{3}((2^{\frac{1}{m-1}}+\epsilon)^{2}
    +2^{\frac{2}{m-1}})
    ^{\frac{m-1}{m}}S_{n}.
\end{align}

Leaving $\epsilon\rightarrow 0^{+}$, we reach the bound
$$\limsup_{n\rightarrow+\infty}\frac{S_{n+1}}{S_{n}}\le 
m^{3}2^{\frac{m+1}{m}}.$$

We have $2^{\frac{m+1}{m}}\le 2^{\frac{4}{3}}<3\le m$, so the limit we just obtained is strictly less than $m^{4}$.\\
So we prove Theorem \ref{pure generalization of BVZ} according to Theorem \ref{reduction}.\\

\section{Small perturbation of $z^{m}+a_{0}$}
In the previous section, we've dealt with the polynomial of the form $f(z)=z^{m}+a_{0}$ and proved Theorem \ref{pure generalization of BVZ}. Now we prove Theorem \ref{more freedom on coefficients}.\\

Let $f=\sum_{k=0}^{m}a_{k}z^{k}$ be a polynomial of degree $m$ ($a_{m}\neq 0$), by the conjugation using linear maps, we may assume that $a_{m}=1$. For $\delta>0$ small enough, we want to obtain similar properties as shown in section 3 under the condition described in the following definition:

\begin{de}
    We say $f$ satisfies the $\delta$-condition if $\left|a_{k}\right|<\delta$ for $1\le k\le m-1$.
\end{de}

\begin{re}
    Note that there's no constraint on the constant term $a_{0}$.
\end{re}

We show that a similar argument works when $f$ satisfies the  $\delta$-condition for small $\delta$. 

\begin{lm}\label{limited disturb of critical points}
    For any $\epsilon>0$, there exists $\delta>0$ such that if $f$ satisfies the $\delta$-condition, all finite critical points of $f$ are inside $\mathrm{B}_{\epsilon}(0)$.
\end{lm}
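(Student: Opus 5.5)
The plan is to work directly with the derivative
$$f'(z)=mz^{m-1}+\sum_{k=1}^{m-1}ka_{k}z^{k-1},$$
which is a polynomial of degree $m-1$ with leading coefficient $m$. Its coefficients do not involve $a_{0}$, so the lack of a constraint on $a_{0}$ in the $\delta$-condition causes no difficulty. Instead of appealing to continuity of roots, as in the proof of the Corollary, we will give an explicit bound.

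Suppose $z$ is a finite critical point with $|z|\ge\epsilon$. Then
$$m|z|^{m-1}=\Bigl|\sum_{k=1}^{m-1}ka_{k}z^{k-1}\Bigr|\le\delta\sum_{k=1}^{m-1}k|z|^{k-1}.$$
We split into two cases.

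\emph{Case $|z|\ge1$.} Every power satisfies $|z|^{k-1}\le|z|^{m-2}$, so the right-hand side is at most $\delta\frac{m(m-1)}{2}|z|^{m-2}$. This forces $|z|\le\delta\frac{m-1}{2}$, which is less than $1$ once $\delta<\frac{2}{m-1}$, a contradiction.

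\emph{Case $\epsilon\le|z|<1$.} Every power satisfies $|z|^{k-1}\le1$, so the right-hand side is at most $\delta\frac{m(m-1)}{2}$. The left-hand side is at least $m\epsilon^{m-1}$, so we need $\delta\frac{m-1}{2}\ge\epsilon^{m-1}$. This fails once $\delta<\frac{2\epsilon^{m-1}}{m-1}$.

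Taking
$$\delta=\min\Bigl\{\frac{2}{m-1},\ \frac{2\epsilon^{m-1}}{m-1}\Bigr\}$$
(for $\epsilon\ge1$ the first term alone suffices, since then the second case is empty) shows that no critical point has $|z|\ge\epsilon$. Hence all critical points lie in $\mathrm{B}_{\epsilon}(0)$.

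There is no real obstacle here. The only care needed is handling both regimes $|z|\ge1$ and $|z|<1$ uniformly, which the case split does.
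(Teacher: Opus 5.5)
Your argument is correct and is essentially the paper's. Both bound $|f'(z)|$ from below by the triangle inequality and show that the leading term $m|z|^{m-1}$ dominates on $|z|\ge\epsilon$. The paper avoids your case split by factoring out $m|z|^{m-1}$: the remaining terms are then at most $\sum_{l=1}^{m-1}|a_{m-l}|\,|z|^{-l}\le\delta\sum_{l=1}^{m-1}\epsilon^{-l}$, uniformly for $|z|\ge\epsilon$.

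One small point: your chosen $\delta$ can equal $\frac{2}{m-1}$ or $\frac{2\epsilon^{m-1}}{m-1}$ rather than lie strictly below it. Your contradictions therefore need the strict inequality $|a_k|<\delta$ from the $\delta$-condition, which does supply them; alternatively, halve the minimum.
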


\begin{proof}
    We can express the derivative $f'$ as $f'(z)=\sum_{k=1}^{m}ka_{k}z^{k-1}$. For every 
    $z\in \mathbb{C}$, we have $$\left|f'(z)\right|\ge m\left|z\right|^{m-1}-\sum_{k=1}^{m-1}k\left|a_{k}\right|\left|z\right|^{k-1}\ge m\left|z\right|^{m-1}(1-\sum_{l=1}^{m-1}\frac{\left|a_{m-l}\right|}{\left|z\right|^{l}}).$$
    
    When $\left|z\right|\ge\epsilon$, 
    $$\left|f'(z)\right|\ge  m\left|z\right|^{m-1}(1-\sum_{l=1}^{m-1}\frac{\left|a_{m-l}\right|}{\epsilon^{l}})$$
     for $\left|a_{k}\right|$($1\le k\le m-1$) small enough we have 
    $$1-\sum_{l=1}^{m-1}\frac{\left|a_{m-l}\right|}{\epsilon^{l}}>0$$ 
    and thus $f'(z)\neq 0$, that is, $z$ is not the critical point of $f$.
\end{proof}

By Lemma \ref{limited disturb of critical points} and Theorem \ref{lct0}, we know that for any $\epsilon>0$, there exists $\delta>0$ small enough such that $\Omega(f)\nsupseteq \mathrm{B}_{\epsilon}(0)$ if $f$ satisfies the $\delta$-condition and $\Omega(f)$ is simply-connected, which is similar to Lemma \ref{origin criteria}. \\

We denote by 
$$b(f)=\sup\{\left|z\right|~|~z\in\mathbb{C}\setminus \Omega(f)\}$$ and we have the following lemma
\begin{lm}\label{boundary gap}
    For any $\epsilon>0$, there exists $\delta>0$ such that  $$\inf\{\left|a_{0}\right|-b(f)\}\ge\frac{\epsilon}{3}$$ where the infimum is taken over those polynomials $f$ satisfying the $\delta$-condition and $\left|a_{0}\right|\ge 2^{\frac{1}{m-1}}+\epsilon$.
\end{lm}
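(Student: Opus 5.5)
The plan is to show directly that every $z$ with $\left|z\right|\ge s:=\left|a_{0}\right|-\frac{\epsilon}{3}$ escapes to $\infty$ under iteration. Then $\mathbb{C}\setminus\Omega(f)\subseteq \mathrm{B}_{s}(0)$, which gives $b(f)\le \left|a_{0}\right|-\frac{\epsilon}{3}$ uniformly in the class of polynomials considered. The mechanism is the same as in Section 3, where $\alpha(\left|a_{0}\right|)$ bounded the non-escaping set. Here the constant term is large, $\left|a_{0}\right|\ge 2^{\frac{1}{m-1}}+\epsilon$, so the escape radius falls strictly below $\left|a_{0}\right|$. The $\delta$-condition is used only to absorb the middle terms. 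We may assume $\epsilon$ is small, say $\epsilon<1$.

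Write $t=\left|a_{0}\right|$ and $s_{0}=2^{\frac{1}{m-1}}$, so that $s_{0}^{m}=2s_{0}$. For $\left|z\right|=x\ge s$ we have $x\ge s\ge 1$ and $t\le x+\frac{\epsilon}{3}$. The triangle inequality then gives
\begin{align*}
\left|f(z)\right|\ge x^{m}-t-\sum_{k=1}^{m-1}\left|a_{k}\right|x^{k}\ge (1-m\delta)x^{m}-x-\frac{\epsilon}{3}.
\end{align*}
So it suffices to find $\delta>0$ and $c>0$, depending only on $m$ and $\epsilon$, such that
\begin{align*}
\phi_{\delta}(x):=(1-m\delta)x^{m}-2x-\frac{\epsilon}{3}\ge c \quad\text{for all } x\ge s_{0}+\frac{2\epsilon}{3},
\end{align*}
because $s\ge s_{0}+\frac{2\epsilon}{3}$. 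Under this bound, $\left|f(z)\right|\ge \left|z\right|+c$. Since $\left|f(z)\right|\ge s$ again, the estimate can be iterated, so $\left|f^{n}(z)\right|\ge \left|z\right|+nc\to\infty$ and $z\in\Omega(f)$.

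For the case $\delta=0$, note that $\phi_{0}(s_{0})=-\frac{\epsilon}{3}$. Moreover $\phi_{0}'(x)=mx^{m-1}-2\ge 2m-2\ge 4$ for $x\ge s_{0}$, since $s_{0}^{m-1}=2$. Hence $\phi_{0}(x)\ge -\frac{\epsilon}{3}+4\cdot\frac{2\epsilon}{3}=\frac{7\epsilon}{3}$ on $[s_{0}+\frac{2\epsilon}{3},+\infty)$. This is where the hypothesis $t\ge s_{0}+\epsilon$ with the specific constant $2^{\frac{1}{m-1}}$ enters, and it explains the choice of $\frac{\epsilon}{3}$.

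The main obstacle is that $a_{0}$ is unbounded, so $x$ ranges over an unbounded set. The perturbation term $m\delta x^{m}$ is therefore not uniformly small, and a naive continuity argument fails. I would handle this by monotonicity rather than smallness. For $\delta\le\frac{1}{2m}$, the derivative satisfies $\phi_{\delta}'(x)=m(1-m\delta)x^{m-1}-2\ge \frac{m}{2}x^{m-1}-2$. I would choose $\delta$ smaller still so that $\phi_{\delta}$ is increasing on $[s_{0},+\infty)$; it suffices that $m(1-m\delta)\cdot 2\ge 2$, which holds for $\delta$ small. It then suffices to check the single point $x_{1}=s_{0}+\frac{2\epsilon}{3}$. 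There $\phi_{\delta}(x_{1})=\phi_{0}(x_{1})-m\delta x_{1}^{m}\ge\frac{7\epsilon}{3}-m\delta x_{1}^{m}$, and $x_{1}$ is bounded in terms of $m$ and $\epsilon$. Taking $\delta$ small enough makes this at least $\epsilon=:c$, which completes the argument.
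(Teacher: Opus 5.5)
Your argument is correct and is essentially the paper's: bound $|f(z)|-|z|$ from below by an increasing function of $|z|$ via the triangle inequality, check positivity at the threshold $|z|=2^{\frac{1}{m-1}}+\frac{2\epsilon}{3}$ using $(2^{\frac{1}{m-1}})^{m}=2\cdot 2^{\frac{1}{m-1}}$, and iterate to get $|f^{n}(z)|\ge |z|+nc$. Your substitution $|a_{0}|\le |z|+\frac{\epsilon}{3}$ removes $a_{0}$ from the auxiliary function $\phi_{\delta}$, which makes the uniformity over unbounded $|a_{0}|$ explicit. The paper's $h(t)$ still contains $|a_{0}|$ and is checked only at $|a_{0}|=2^{\frac{1}{m-1}}+\epsilon$. (Your preliminary reduction to $\epsilon<1$ is not justified as stated, but it is never used and can simply be dropped.)
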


\begin{re}
    As is shown in section 3, there's always a gap between 
    $\partial\Omega(f)$ and $\partial \mathrm{B} _{\left|a_{0}\right|}(0)$ when $\left|a_{0}\right|>2^{\frac{1}{m-1}}$, and Lemma \ref{boundary gap} tells us that similar situation arises.
\end{re}

\begin{proof}
    Let $h(t)=t^{m}-\sum_{k=0}^{m-1}\left|a_{k}\right|t^{k}-t$, we have $h'(t)=mt^{m-1}-\sum_{k=1}^{m-1}k\left|a_{k}\right|t^{k}-1$.
    
    For $t\ge 1$, we have 
    $$\frac{h'(t)}{t^{m-1}}=m-\sum_{1=0}^{m-1}k\left|a_{k}\right|t^{k-m}-t^{1-m}\ge m-1-\sum_{1=0}^{m-1}k\left|a_{k}\right|.$$
    So there exists some $\delta_{1}>0$ such that when the $\delta_{1}$-condition is satisfied, $\frac{h'(t)}{t^{m-1}}>0$ for $t\ge 1$, which is $h'(t)>0$.
    
    For any $\epsilon>0$, since $(2^{\frac{1}{m-1}}+\frac{2}{3}\epsilon)^{m}>2(2^{\frac{1}{m-1}}+\epsilon)$, we can find $0<\delta_{2}<\delta_{1}$ such that 
    $$(2^{\frac{1}{m-1}}+\frac{2}{3}\epsilon)^{m}-\sum_{k=1}^{m-1}\delta(2^{\frac{1}{m-1}}+\frac{2}{3}\epsilon)^{k}-(2^{\frac{1}{m-1}}+\epsilon)>\frac{\epsilon}{3}.$$
    Then whenever the $\delta_{2}$-condition is satisfied and $\left|a_{0}\right|\ge 2^{\frac{1}{m-1}}+\epsilon$, we have $h(t)>\frac{\epsilon}{3}$ for 
    $t\ge \left|a_{0}\right|-\frac{\epsilon}{3}$.
    
    According to the above analysis, one can show inductively that 
    $\left|f^{n}(z)\right|\ge\left|z\right|+\frac{n}{3}\epsilon_{2}$ for $\left|z\right|\ge \left|a_{0}\right|-\frac{\epsilon}{3}$, which is the lemma.
\end{proof}

\begin{lm}\label{limited effect on the constant term}
    For any $\epsilon>0$, there exists $\delta>0$ such that as long as the $\delta$-condition is satisfied and $\Omega(f)$ is simply connected, we have $\left|a_{0}\right|<2^{\frac{1}{m-1}}+\epsilon$.\\
\end{lm}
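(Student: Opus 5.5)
We argue by contradiction, combining Lemma \ref{limited disturb of critical points}, Theorem \ref{lct1}/\ref{lct0} and Lemma \ref{boundary gap}. The idea mirrors the pure case of Section 3. There, simple connectivity forced $0\notin\Omega(f)$, hence $a_0=f(0)\notin\Omega(f)$, hence $|a_0|\le b(f)\le\alpha(|a_0|)$, which bounds $|a_0|$. Here the critical points are no longer exactly at $0$, so the argument must be perturbed.

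Fix $\epsilon>0$. Let $\delta_1$ be the constant given by Lemma \ref{boundary gap} for this $\epsilon$. Suppose $f$ satisfies the $\delta$-condition with $\delta\le\delta_1$, that $\Omega(f)$ is simply connected, and that $|a_0|\ge 2^{\frac{1}{m-1}}+\epsilon$. Lemma \ref{boundary gap} then gives $b(f)\le |a_0|-\frac{\epsilon}{3}$. In other words, every point $w$ with $|w|>|a_0|-\frac{\epsilon}{3}$ lies in $\Omega(f)$.

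Next, choose $\eta>0$ small, with $\eta<1$ and $\eta\le\epsilon$, and shrink $\delta$ so that Lemma \ref{limited disturb of critical points} places all finite critical points $c$ of $f$ in $\mathrm{B}_{\eta}(0)$. For such $c$,
$$|f(c)-a_0|\le |c|^m+\sum_{k=1}^{m-1}|a_k||c|^k\le \eta^m+(m-1)\delta,$$
which is smaller than $\frac{\epsilon}{3}$ once $\eta$ and $\delta$ are small. It follows that $|f(c)|>|a_0|-\frac{\epsilon}{3}$, so $f(c)\in\Omega(f)$. Since $\Omega(f)$ is completely invariant, this gives $c\in\Omega(f)$ for every finite critical point. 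Theorem \ref{lct0} then says $\Omega(f)$ is not simply connected, a contradiction. Hence $|a_0|<2^{\frac{1}{m-1}}+\epsilon$, and the final $\delta$ is the minimum of the finitely many constraints above.

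The main obstacle is making sure the constant from Lemma \ref{boundary gap} really yields the inclusion $\{|w|>|a_0|-\epsilon/3\}\subseteq\Omega(f)$ uniformly in $a_0$. The proof of that lemma shows $|f^n(z)|\to\infty$ for $|z|\ge|a_0|-\epsilon/3$, and this is exactly the inclusion we need. There is one subtlety: that proof requires $h(t)>\epsilon/3$ for all large $|a_0|$, not just $|a_0|$ near the threshold. This holds because $h$ is increasing in $t$ and $t^m$ dominates, so the estimate only improves as $|a_0|$ grows. If this uniformity turned out to be delicate, I would re-derive the escape estimate directly: $|f(z)|\ge|z|^m-\sum_{k\ge1}\delta|z|^k-|a_0|$, and for $|z|\ge|a_0|-\epsilon/3\ge 2^{\frac{1}{m-1}}+\frac{2\epsilon}{3}$ one checks that $|f(z)|-|z|$ stays bounded below by a positive constant.
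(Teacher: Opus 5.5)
Your proof is correct and follows the paper's argument: take $\delta$ from Lemma \ref{boundary gap}, confine the critical points to a small disk via Lemma \ref{limited disturb of critical points}, note that their images lie within $\epsilon/3$ of $a_0$ and hence in $\Omega(f)$, and conclude by complete invariance and Theorem \ref{lct0}. Your bound $\eta^m+(m-1)\delta$ (using $\eta<1$) is slightly cleaner than the paper's version, which chooses the radius $t$ in terms of the $|a_k|$ before $\delta$ has been fixed.
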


\begin{proof}
    We assume the contrary that 
    $\left|a_{0}\right|\ge2^{\frac{1}{m-1}}+\epsilon$.
    We choose $\delta_{1}>0$ corresponding to $\epsilon$ in Lemma \ref{boundary gap}, and choose $t>0$ small enough such that 
    $t^{m}+\sum_{k=1}^{m-1}\left|a_{k}\right|t^{k}<\frac{\epsilon}{3}$.
    
    Choose $0<\delta_{2}<\delta_{1}$ corresponding to $\epsilon=t$ in Lemma \ref{limited disturb of critical points}, then every critical point is inside $\mathrm{B}_{t}(0)$
    if $f$ satisfies the $\delta_{2}$-condition.
    
    On the other hand, for any $z\in \mathrm{B}_{t}(0)$, 
    $$\left|f(z)-a_{0}\right|\le t^{m}+\sum_{k=1}^{m-1}\left|a_{k}\right|t^{k}<\frac{\epsilon}{3}$$
    which gives $\left|f(z)\right|>\left|a_{0}\right|-\frac{\epsilon}{3}$ by the triangle inequality.
    
    From the way $\delta_{1}>0$ is chosen we know that 
    $$\Omega(f)\supseteq\{z\in\mathbb{C}~|~\left|z\right|\ge \left|a_{0}\right|-\frac{\epsilon}{3}\}$$ if $f$ satisfies the $\delta_{2}$-condition, which means $f(z)\in \Omega(f)$ and immediately implies that $z\in \Omega(f)$. Since $z\in \mathrm{B}_{t}(0)$ is chosen arbitrarily, this contradicts the simply-connectedness of $\Omega(f)$.
\end{proof}

\begin{lm}\label{slight change on the domain}
    For any $\epsilon>0$, there exists $\delta>0$ such that if $f$ satisfies the $\delta$-condition and $\Omega(f)$ is simply connected, we have $\Omega(f)\supseteq\{z\in\mathbb{C}~|~\left|z\right|\ge 2^{\frac{1}{m-1}}+\epsilon\}$.
\end{lm}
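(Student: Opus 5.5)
Fix $\epsilon>0$. First apply Lemma \ref{limited effect on the constant term} with a smaller tolerance, say $\epsilon/2$, to obtain $\delta_{1}>0$ such that the $\delta_{1}$-condition together with simple-connectedness of $\Omega(f)$ forces $\left|a_{0}\right|<2^{\frac{1}{m-1}}+\frac{\epsilon}{2}$. Then the result follows once we show that, for $\delta$ small enough, every $z$ with $\left|z\right|\ge \rho:=2^{\frac{1}{m-1}}+\epsilon$ has orbit tending to $\infty$. This is the same mechanism as the inclusion $\Omega(f)\supseteq\{\left|z\right|>\alpha(\left|a_{0}\right|)\}$ in Section 3, applied now with the perturbed coefficients.

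The key estimate is a uniform linear escape bound. For $\left|z\right|=t\ge\rho$, the triangle inequality gives
$$\left|f(z)\right|\ge t^{m}-\sum_{k=1}^{m-1}\delta t^{k}-\left|a_{0}\right|\ge t^{m}-\sum_{k=1}^{m-1}\delta t^{k}-2^{\frac{1}{m-1}}-\frac{\epsilon}{2}.$$
We want the right-hand side to be at least $t+\eta$ for some fixed $\eta>0$. Set $\phi(t)=t^{m}-t-\sum_{k=1}^{m-1}\delta t^{k}$. As in the proof of Lemma \ref{boundary gap}, for $\delta$ below some $\delta_{2}$ we have $\phi'(t)>0$ for $t\ge1$, so $\phi$ is increasing on $[1,+\infty)$. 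It therefore suffices to check the inequality at $t=\rho$, that is,
$$\rho^{m}-\rho-\sum_{k=1}^{m-1}\delta\rho^{k}>2^{\frac{1}{m-1}}+\frac{\epsilon}{2}+\eta.$$
When $\delta=0$, write $s=2^{\frac{1}{m-1}}$, so $s^{m}=2s$. Then $\rho^{m}-\rho\ge (s+\epsilon)^{m}-s-\epsilon$. Since $(s+\epsilon)^{m}\ge s^{m}+ms^{m-1}\epsilon=2s+2m\epsilon$, we get $\rho^{m}-\rho\ge s+(2m-1)\epsilon$, which exceeds $s+\frac{\epsilon}{2}$ by at least $\epsilon$. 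Hence we may choose $\delta_{3}>0$ so that $\sum_{k=1}^{m-1}\delta_{3}\rho^{k}<\frac{\epsilon}{2}$, and take $\eta=\frac{\epsilon}{2}$.

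Finally take $\delta=\min\{\delta_{1},\delta_{2},\delta_{3}\}$. For $\left|z\right|\ge\rho$ we get $\left|f(z)\right|\ge\left|z\right|+\eta\ge\rho$, so the region $\{\left|z\right|\ge\rho\}$ is forward invariant. Induction then gives $\left|f^{n}(z)\right|\ge\left|z\right|+n\eta\to\infty$, hence $z\in\Omega(f)$. The only delicate point is ordering the choices: the bound on $\left|a_{0}\right|$ from Lemma \ref{limited effect on the constant term} must be fixed before $\delta_{3}$ is chosen. Otherwise this is the routine monotonicity argument used in Lemma \ref{boundary gap}, and I expect no real obstacle. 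One could also avoid the factor $\epsilon/2$ by invoking Lemma \ref{limited effect on the constant term} with a general $\epsilon'<\epsilon$ and absorbing the difference in the same way.
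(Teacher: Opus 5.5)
Your proposal is correct and follows essentially the same route as the paper: bound $|a_0|$ via Lemma \ref{limited effect on the constant term}, then use the monotonicity of $t^{m}-\sum_{k}|a_k|t^{k}-t$ on $[1,+\infty)$ (as in Lemma \ref{boundary gap}) to get a linear escape bound on $\{|z|\ge \rho\}$, $\rho=2^{\frac{1}{m-1}}+\epsilon$, and conclude by induction. The only difference is cosmetic: you apply the constant-term lemma with $\epsilon/2$ to get an explicit uniform margin $\eta=\epsilon/2$, whereas the paper applies it with $\epsilon$ itself and only needs $h(\rho)>0$, which holds because $|a_0|<\rho$ and $\rho^{m}>2\rho$.
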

\begin{proof}
    Choose $\delta_{1}>0$ corresponding to $\epsilon$ in Lemma \ref{limited effect on the constant term}. Then whenever $f$ satisfies the $\delta_{1}$-condition and $\Omega(f)$ is simply-connected, we have $\left|a_{0}\right|<2^{\frac{1}{m-1}}+\epsilon$.

    Let $h(t)=t^{m}-\sum_{k=0}^{m-1}\left|a_{k}\right|t^{k}-t$, as mentioned in the proof of Lemma \ref{boundary gap}, there exists $0<\delta_{2}<\delta_{1}$ such that for any $t\ge 2^{\frac{1}{m-1}}+\epsilon\}$, $h(t)\ge h(2^{\frac{1}{m-1}}+\epsilon)>0$. Then one can show inductively that $\left|f^{n}(z)\right|\ge \left|z\right|+nh(2^{\frac{1}{m-1}}+\epsilon)$for $\left|z\right|\ge 2^{\frac{1}{m-1}}+\epsilon$, which is the Lemma.    
\end{proof}

\subsection{The proof of Theorem \ref{more freedom on coefficients} when $m\ge 5$}
Similar as in section 3, the estimate is simpler when $m\ge 5$.
Choose $\epsilon>0$ such that $2^{\frac{1}{m-1}}+\epsilon<5^{\frac{1}{2(m-1)}}\le m^{\frac{1}{2(m-1)}}$. 
Choose $\delta_{1}>0$ fits $\epsilon$ both in Lemma \ref{limited effect on the constant term} and in Lemma \ref{slight change on the domain}, and choose $0<\delta_{2}<\delta_{1}$ such that 
$$(2^{\frac{1}{m-1}}+\epsilon)^{m-1}
+\delta_{2}\sum_{k=1}^{m-2}(2^{\frac{1}{m-1}}+\epsilon)^{k-1}<\sqrt{5}.$$
If $f$ satisfies the $\delta_{2}$-condition, $$\limsup_{n\rightarrow+\infty}
\max\{\sum_{w\in\mathbb{C},f(w)=w_{n,k}}
    \left|f'(w)\right|^{2}
    ~|~0\le k<m^{n}\}<m(\sqrt{5}m)^{2}=5m^{3}\le m^{4}$$
and thus $\limsup_{n\rightarrow+\infty}\frac{S_{n+1}}{S_{n}}< m^{4}$, which leads to Theorem \ref{more freedom on coefficients} by Theorem \ref{reduction}.\\

\subsection{The proof including $m=3,4$}
In the last subsection the case where $m=3,4$ isn't taken into account, for this case, we need more complicated procedures to get a sharper estimate, which also works for $m\ge 5$.

Let $g(z)=z^{m}+a_{0}$, we compare the preimages of a certain point under $f$ and $g$ and their iterations. Recall that for the domain $\Omega(g)$ we obtained 
$$\sum_{g(g(w'))=w}\left|g'(g(w'))\right|^{2}\left|g'(w')\right|^{2}
\le C\sum_{g(w')=w}\left|g'(w')\right|^{2}$$
when $\left|a_{0}\right|\le 2^{\frac{1}{m-1}}$ and $dist(w,\partial\Omega(g))\le\epsilon$ for some $C<m^{4}$ and $\epsilon>0$, so we consider the following function
$$u_{\delta}:\mathbb{C}\times\mathbb{C}\setminus D\rightarrow
[0,+\infty]$$
defined as 
$$u_{\delta}(w,a)=\sup\bigg\{\left|
\frac{\sum_{f(f(w'))=w}\left|f'(f(w'))\right|^{2}\left|f'(w')\right|^{2}}{\sum_{f(w')=w}\left|f'(w')\right|^{2}}
-\frac{\sum_{g(g(w'))=w}\left|g'(g(w'))\right|^{2}\left|g'(w')\right|^{2}}{\sum_{g(w')=w}\left|g'(w')\right|^{2}}
\right|\bigg\}$$
where the supreme is taken over polynomials $f$ satisfying  the $\delta$-condition, 
and $a_{0}=a$. $D=\{(w,a)\in \mathbb{C}\times\mathbb{C}~|~w=a\}$ is the diagonal set. The summation above is counted in multiplicity as roots of corresponding polynomials.\\

By the continuity of the roots with respect to the polynomial, we know that $u_{\delta}$ is continuous with respect to both variables and converges pointwisely to $0$ as $\delta\rightarrow 0^{+}$, so $\{u_{\delta}|\delta>0\}$ converges uniformly on any compact subset of $\mathbb{C}\times\mathbb{C}\setminus D$.\\

Choose $\epsilon>0$ such that $2(2^{\frac{1}{m-1}}+2\epsilon)^{2}<8$, and choose $\delta_{1}>0$ fits $\epsilon$ in Lemma \ref{boundary gap}, Lemma \ref{limited effect on the constant term}  and Lemma \ref{slight change on the domain}. Then whenever $\Omega(f)$ is simply-connected and $f$ satisfies the $\delta_{1}$-condition , we have $\left|a_{0}\right|\le 2^{\frac{1}{m-1}}+\epsilon<2$.\\

For any $w\in\mathbb{C}$ with $g(g(w))=a_{0}$, we have 
$$g(w)^{m}+a_{0}=a_{0}\Longrightarrow g(w)=0\Longrightarrow 
\left|w\right|=\left|a_{0}\right|^{\frac{1}{m}}\Longrightarrow 
\left|g'(w)\right|=m\left|a_{0}\right|^{\frac{m-1}{m}}.$$

So we consider the following function for $\epsilon'>0$
$$v_{\epsilon'}:\mathbb{C}\rightarrow[0,+\infty]$$
defined by $$v_{\epsilon'}(a)=
\sup\bigg\{\left|\left|f'(w)\right|-m\left|a\right|^{\frac{m-1}{m}}\right|\bigg\}$$
where the supreme is taken over all polynomials $f$  satisfying the $\epsilon'$-condition and $a_{0}=a$, and the points $w\in\mathbb{C}$ with $\left|f(f(w))-a\right|<\epsilon'$.\\

By the continuity of the roots with respect to the polynomial, $v_{\epsilon'}$ is continuous and converges pointwisely to $0$ as $\epsilon'\rightarrow 0^{+}$. So it converges uniformly on $\bar{\mathrm{B}}_{2}(0)$.\\

Choose $\epsilon'>0$ such that $v_{\epsilon'}<\epsilon$ on $\bar{\mathrm{B}}_{2}(0)$. 
Let $K=\{(w,a)~|~\left|w_{*}\right|,\left|a\right|\le 2,
\left|w-a\right|\ge\epsilon'\}$. Then $\{u_{\delta}|\delta>0\}$
converges uniformly on $K$ as 
$K\subseteq\mathbb{C}\times\mathbb{C}\setminus\ D$ is a compact subset.\\

Choose $0<\delta_{2}<\delta_{1}$ such that $u_{\delta_{2}}<\epsilon$ on $K$. Then whenever $f$ satisfies  the $\delta_{2}$-condition , we have the following for a given point $w\in\mathbb{C}$

\begin{align}
    &\frac{\sum_{f(f(w'))=w}\left|f'(f(w'))\right|^{2}\left|f'(w')\right|^{2}}{\sum_{f(w)=w_{*}}\left|f'(w')\right|^{2}}\\ \notag &\le
\frac{\sum_{g(g(w'))=w}\left|g'(g(w'))\right|^{2}\left|g'(w')\right|^{2}}{\sum_{g(w')=w}\left|g'(w')\right|^{2}}+\epsilon\\ \notag &\le
 m^{3}(2(2^{\frac{1}{m-1}}+\epsilon)^{2})^{\frac{m-1}{m}}+\epsilon<m^{4}
\end{align}
when $w$ is close enough to $\partial\Omega(f)$ and $\left|w-a_{0}\right|\ge\epsilon'$.

When $\left|w-a_{0}\right|<\epsilon'$, by the definition of  $\epsilon'$ we have $\max\bigg\{\left|f'(w)-m\left|a_{0}\right|^{\frac{m-1}{m}}\right|~|~w\in\mathbb{C}, f(f(w))=w_{*}\bigg\}<\epsilon$ if $f$ satisfies the $\epsilon'$-condition.\\

In this case, 
\begin{align}
    &\frac{\sum_{f(f(w'))=w}\left|f'(f(w'))\right|^{2}\left|f'(w')\right|^{2}}{\sum_{f(w')=w}\left|f'(w')\right|^{2}}\\ \notag &\le
    \max\{\sum_{f(w'')=w'}\left|f'(w'')\right|^{2}~|~f(w')=w\}\\
    \notag &\le m(m\left|a_{0}\right|^{\frac{m-1}{m}}+\epsilon)^{2}<m^{4}.
\end{align}

Let $\delta=\min\{\delta_{2},\epsilon'\}$,
we have $\limsup_{n\rightarrow+\infty}\frac{S_{n+1}}{S_{n}}<m^{4}$ if $f$ satisfies the $\delta$-condition. And we prove Theorem \ref{more freedom on coefficients} according to Theorem \ref{reduction}.

\section{Acknowledgement}
I would like to show my gratitude to everyone who contribute to my thesis. First, I am deeply grateful to my thesis advisor, Vladimir Markovic and professor Yitwah Cheung for inspiring me with valuable ideas. I also wish to thank professor Haakan Hedenmalm for the interesting discussions we shared. My thanks also go to my fellow classmate, Tingyu Zhang, for helping me become familiar with Latex. I appreciate the support of the school library at Tsinghua University for providing access to academic materials. Additionally, I am thankful to Qiuzhen College for equipping me with the knowledge and skills necessary to form my thesis.

\printbibliography

\end{document}